\definecolor{shadecolor}{gray}{0.9}
\theoremstyle{plain}  
\newtheorem{thm}{Theorem}[section] 
\newtheorem{lem}[thm]{Lemma} 
\newtheorem{prop}[thm]{Proposition} 
\newtheorem{cor}[thm]{Corollary} 
\theoremstyle{definition} 
\newtheorem{defn}[thm]{Definition}
\theoremstyle{remark} 
\newtheorem{rem}[thm]{Remark}
\newtheoremstyle{assumption}
{3pt}
{3pt}
{}
{}
{\bf}
{.}
{.5em}
{\thmname{#1} (\thmnote{#3}\thmnumber{#2})}
\theoremstyle{assumption}
\newtheorem{ass}{Assumption}
\newcommand{\diff}{\,\mathrm{d}}
\newcommand{\dint}{\mathrm{d}}
\newcommand{\E}{\mathbb{E}}
\newcommand{\conv}{\operatorname{conv}}
\newcommand{\ES}{\operatorname{ES}}
\newcommand{\essinf}{\operatorname{ess\,inf}}
\newcommand{\F}{\mathcal{F}}
\newcommand{\R}{\mathbb{R}}
\newcommand{\A}{\mathsf{A}}
\renewcommand{\O}{\mathsf{O}}
\newcommand{\one}{\mathds{1}}
\newcommand{\interior}{\operatorname{int}}
\newcommand{\mat}{\operatorname{mat}}
\newcommand{\rank}{\operatorname{rank}}
\newcommand{\ph}{\varphi}
\newcommand{\VaR}{\operatorname{VaR}}
\newcommand{\argmin}{\operatorname{arg\,min}}
\def\be{\begin{equation} \label}
\def\ee{\end{equation}}
\numberwithin{equation}{section} 
\begin{document}

\title{Higher order elicitability and Osband's principle}
\author{Tobias Fissler\thanks{University of Bern, Department of Mathematics and Statistics, Institute of Mathematical Statistics and Actuarial Science, Sidlerstrasse 5, 3012 Bern, Switzerland, 
e-mails: \texttt{tobias.fissler@stat.unibe.ch} and \texttt{johanna.ziegel@stat.unibe.ch}} \and Johanna F.~Ziegel\footnotemark[1]}
\maketitle

\begin{abstract}
A statistical functional, such as the mean or the median, is called elicitable if there is a scoring function or loss function such that the correct forecast of the functional is the unique minimizer of the expected score. Such scoring functions are called strictly consistent for the functional. The elicitability of a functional opens the possibility to compare competing forecasts and to rank them in terms of their realized scores. In this paper, we explore the notion of
elicitability for multi-dimensional functionals and give both necessary and sufficient conditions for strictly consistent scoring functions. We cover the case of functionals with elicitable components, but we also show that one-dimensional functionals that are not elicitable can be a component of a higher order elicitable functional. In the case of the variance this is a known result. However, an important result of this paper is that spectral risk measures with a spectral measure with finite support are jointly elicitable if one adds the `correct' quantiles. A direct consequence of applied interest is that the pair (Value at Risk, Expected Shortfall) is jointly elicitable under mild conditions that are usually fulfilled in risk management applications.
\end{abstract}

\noindent
\textit{Keywords:}
Consistency; Decision theory; Elicitability; Expected Shortfall; Point forecasts; Propriety; Scoring functions; Scoring rules; Spectral risk measures; Value at Risk

\noindent
\textit{AMS 2010 Subject Classification: } 62C99; 91B06

\section{Introduction}
Point forecasts for uncertain future events are issued in a variety of different contexts such as business, government, risk-management or meteorology, and they are often used as the basis for strategic decisions. In all these situations, one has a random quantity $Y$ with unknown distribution $F$. One is interested in a statistical property of $F$, that is a functional $T(F)$. Here, $Y$ can be real-valued (GDP growth for next year), vector-valued (wind-speed, income from taxes for all cantons of Switzerland), functional-valued (path of the interchange rate Euro\,-\,Swiss franc over one day), or set-valued (area of rain tomorrow, area of influenza in a country). Likewise, also the functional $T$ can have a variety of different sorts of values, amongst them the real- and vector-valued case (mean, vector of moments, covariance matrix, expectiles), the set-valued case (confidence regions) or also the functional-valued case (distribution functions). This article is concerned with the situation where $Y$ is a $d$-dimensional random vector and $T$ is a $k$-dimensional functional, thus also covering the real-valued case.

It is common to assess and compare competing point forecasts in terms of a loss function or \textit{scoring function}. This is a function $S$ such as the squared error or the absolute error which is negatively oriented in the following sense: If the forecast $x\in \R^k$ is issued and the event $y\in\R^d$ materializes, the forecaster is \textit{penalized} by the real value $S(x,y)$. In the presence of several different forecasters one can compare their performances by ranking their realized scores. Hence, forecasters have an incentive to minimize their Bayes risk or \textit{expected loss} $\E_F[S(x,Y)]$. \cite{Gneiting2011} demonstrated impressively that scoring functions should be incentive compatible in that they should encourage the forecasters to issue truthful reports; see also \citet{MurphyDaan1985,EngelbergManskiETAL2009}. In other words, the choice of the scoring function $S$ must be consistent with the choice of the functional $T$. We say a scoring function $S$ is $\F$-consistent for a functional $T$ if $T(F)\in \argmin_x \E_F[S(x,Y)]$ for all $F\in \F$ where the class $\F$ of probability distributions is the domain of $T$. If $T(F)$ is the unique minimizer of the expected score for all $F\in\F$ we say that $S$ is \textit{strictly $\F$-consistent for $T$}. Hence, a strictly $\F$-consistent scoring function for $T$ elicits $T$. Following \cite{Lambert2008} and \citet{Gneiting2011}, we call a functional $T$ with domain $\F$ \textit{elicitable} if there exists a strictly $\F$-consistent scoring function for $T$. 

The elicitability of a functional allows for regression, such as quantile regression and expectile regression \citep{Koenker2005,NeweyPowell1987} and for M-estimation \citep{Huber1964}.
Early work on elicitability is due to \cite{Osband1985,OsbandReichelstein1985}. More recent advances in the one-dimensional case, that is $k = d = 1$ are due to \citet{Gneiting2011, Lambert2012, SteinwartPasinETAL2014} with the latter showing the intimate relation between elicitability and identifiability. Under mild conditions, many important functionals are elicitable such as moments, ratios of moments, quantiles and expectiles. However, there are also relevant functionals which are not elicitable such as variance, mode, or Expected Shortfall \citep{Osband1985, Weber2006, Gneiting2011, Heinrich2013}. 

	With the so-called \textit{revelation principle} (see Proposition \ref{prop: revelation principle}) \cite{Osband1985} was one of the first to show that a functional, albeit itself not being elicitable, can be a component of an elicitable vector-valued functional. The most prominent example in this direction is that the pair (mean, variance) is elicitable despite the fact that variance itself is not. However, it is crucial for the validity of the revelation principle that there is a bijection between the pair (mean, variance) and the first two moments. 
Until now, it appeared as an open problem if there are elicitable functionals with non-elicitable components other than those which can be connected to a functional with elicitable components via a bijection. \cite{FrongilloKash2014b} conjectured that this is generally not possible. We solve this open problem and can reject their conjecture: Corollary \ref{cor:VaR-ES} shows that the pair (Value at Risk, Expected Shortfall) is elicitable, subject to mild regularity assumptions, improving a recent partial result of \cite{AcerbiSzekely2014}. To the best of our knowledge, we provide the first proof of this result in full generality. 
In fact, Corollary \ref{cor:spectral} demonstrates more generally that \textit{spectral risk measures} with a spectral measure having finite support in $(0,1]$ can be a component of an elicitable vector-valued functional. 
These results may lead to a new direction in the contemporary discussion about what risk measure is best in practice, and in particular about the importance of elicitability in risk measurement contexts \citep{EmbrechtsHofert2014,EmmerKratzETAL2013,Davis2013,AcerbiSzekely2014}. 

Complementing the question whether a functional is elicitable or not, it is interesting to determine the class of strictly consistent scoring functions for a functional, or at least to characterize necessary and sufficient conditions for the strict consistency of a scoring function. Most of the existing literature focuses on real-valued functionals meaning that $k=1$. For the case $k>1$, mainly linear functionals, that is, vectors of expectations of certain transformations, are classified where the only strictly consistent scoring functions are \textit{Bregman functions} \citep{Savage1971, OsbandReichelstein1985,DawidSebastian1999, BanerjeeGuoETAL2005, AbernethyFrongillo2012}; for a general overview of the existing literature, we refer to \cite{Gneiting2011}. To the best of our knowledge, only \cite{Osband1985}, \cite{Lambert2008} and \cite{FrongilloKash2014b} investigated more general cases of functionals, the latter also treating vectors of ratios of expectations as the first non-linear functionals. In his doctoral thesis, \cite{Osband1985} established a necessary representation for the first order derivative of a strictly consistent scoring function with respect to the report $x$ which connects it with identification functions. Following \cite{Gneiting2011} we call results in the same flavor \textit{Osband's principle}. Theorem \ref{thm: Osband's principle} in this paper complements and generalizes \citet[Theorem 2.1]{Osband1985}. Using our techniques, we retrieve the results mentioned above concerning the Bregman representation, however under somewhat stronger regularity assumptions than the one in \cite{FrongilloKash2014b}; see Corollary \ref{cor:Bregman}. On the other hand, we are able to treat a much broader class of functionals; see Proposition \ref{prop:elicitable components}, Remark \ref{rem:elicitable components} and Theorem \ref{thm:spectral}. In particular, we show that under mild richness assumptions on the class $\F$, any strictly $\F$-consistent scoring function for a vector of quantiles and\,/\,or expectiles is the sum of strictly $\F$-consistent one dimensional scoring functions for each quantile\,/\,expectile; see Corollary \ref{cor:elicitable components}.

The paper is organized as follows. In Section \ref{sec:basic}, we introduce notation and derive some basic results concerning the elicitability of $k$-dimensional functionals. Section \ref{sec:Osband} is concerned with \textit{Osband's principle}, Theorem \ref{thm: Osband's principle}, and its immediate consequences. We investigate the situation where a functional is composed of elicitable components in Section \ref{sec:elicitable components}, whereas Section \ref{sec:spectral} is dedicated to the elicitability of spectral risk measures. We end our article with a brief discussion; see Section \ref{sec:discussion}. Most proofs are deferred to Section \ref{sec:proofs}.

\section{Properties of higher order elicitability}\label{sec:basic}

\subsection{Notation and definitions}\label{subsec:notation}

Following \cite{Gneiting2011}, we introduce a decision-theoretic framework for the evaluation of point forecasts. To this end, we introduce an \textit{observation domain} $\O\subseteq \R^d$. We equip $\O$ with the Borel $\sigma$-algebra $\mathcal O$ using the induced topology of $\R^d$. We identify a Borel probability measure $P$ on $(\O,\mathcal O)$ with its cumulative distribution function (cdf) $F_P\colon \O\to[0,1]$ defined as $F_P(x):=P((-\infty,x]\cap\O)$, where $(-\infty,x] = (-\infty,x_1]\times \dots\times (-\infty,x_d]$ for $x = (x_1,\dots,x_d) \in \R^d$. Let $\F$ be a class of distribution functions on $(\O,\mathcal O)$. Furthermore, for some integer $k\ge1$, let $\A\subseteq \R^k$ be an \textit{action domain}. 
To shorten notation, we usually write $F\in\F$ for a cdf and also omit to mention the $\sigma$-algebra $\mathcal O$.

Let $T\colon\F\to\A$ be a functional. We introduce the notation $T(\F):=\{x\in\A \colon x = T(F) \; \text{for some}\; F \in \F\}$. For a set $M\subseteq \R^k$ we will write $\interior(M)$ for its interior with respect to $\R^k$, that is, $\interior(M)$ is the biggest open set $U\subseteq \R^k$ such that $U\subseteq M$. The convex hull of $M$ is defined as ,
\[
\conv(M):= \Big\{ \sum_{i=1}^n \lambda_{i}x_{i} \,\big|\, n\in\mathbb N, \ x_1, \ldots, x_n\in M, \ \lambda_1, \ldots, \lambda_n>0,\ \sum_{i=1}^n \lambda_i=1 \Big\}.
\]

We say that a function $a\colon\O\to\R$ is $\F$-integrable if it is $F$-integrable for each $F\in\F$. A function $g\colon \A \times \O \to \R$ is $\F$-integrable if $g(x,\cdot)$ is $\F$-integrable for each $x\in \A$. If $g$ is $\F$-integrable, we introduce the map 
\[
\bar g\colon \A \times \F \to \R, \quad (x,F) \mapsto  \bar g(x,F) = \int g(x,y)\diff F(y).
\]
Consequently, for fixed $F\in\F$ we can consider the function $\bar g(\cdot, F)\colon \A \to \R$, $x\mapsto \bar g(x,F)$, and for fixed $x\in \A$ we can consider the (linear) functional $\bar g(x,\cdot) \colon \F \to \R$, $F\mapsto \bar g(x,F)$.

If we fix $y\in\O$ and $g$ is sufficiently smooth in its first argument, then for $m\in\{1, \ldots, k\}$ we denote the $m$-th partial derivative of the function $g(\cdot,y)$ with $\partial_m g(\cdot,y)$. More formally, we set
\[
\partial_m g(\cdot, y) \colon \interior(\A) \to\R, \quad (x_1, \ldots, x_k)\mapsto \tfrac{\partial}{\partial x_m} g(x_1, \ldots, x_k,y).
\]
We denote by $\nabla g(\cdot, y)$ the gradient of $g(\cdot, y)$ defined as $\nabla g(\cdot,y):= \big(\partial_1 g(\cdot,y),$ $ \ldots,\partial_k g(\cdot,y)\big)^\top$; and with $\nabla^2 g(\cdot,y):= \big(\partial_l\partial_m g(\cdot,y)\big)_{l,m=1, \ldots, k}$ the Hessian of $g(\cdot, y)$.
\textit{Mutatis mutandis}, we use the same notation for $\bar g(\cdot, F)$, $F\in\F$. We call a function on $\A$ differentiable if it is differentiable in $\interior(\A)$ and use the notation as given above. The restriction of a function $f$ to some subset $M$ of its domain is denoted by $f_{|M}$.

\begin{defn}[Consistency]\label{def:consistency}
A \textit{scoring function} is an $\F$-integrable function $S\colon \A\times \O \to \R$.
It is said to be \textit{$\F$-consistent} for a functional $T\colon \F \to \A$ if
\(
\bar S(T(F),F)\le \bar S(x,F)
\)
for all $F\in \F$ and for all $x\in \A$. Furthermore, $S$ is \textit{strictly $\F$-consistent} for $T$ if it is $\F$-consistent for $T$ and if
$\bar S(T(F),F) = \bar S(x,F)$ implies that $x = T(F)$
for all $F\in \F$ and for all $x\in \A$. Wherever it is convenient we assume that $S(x,\cdot)$ is locally bounded for all $x \in \A$.
\end{defn}

\begin{defn}[$k$-elicitability]
A functional $T\colon \F\to \A\subseteq\R^k$ is called \textit{$k$-elicitable}, if there exists a strictly $\F$-consistent scoring function for $T$.
\end{defn}

\begin{defn}[Identification function]
An \emph{identification function} is an $\F$-integrable function $V\colon \A\times \O \to \R^k$. It is said to be an \textit{$\F$-identification function} for a functional $T\colon \F \to \A\subseteq \R^k$ if
\(
\bar V(T(F),F)=0
\)
for all $F\in\F$. Furthermore, $V$ is a \textit{strict $\F$-identification function} for $T$ if 
$\bar V(x,F) = 0$ holds if and only if $x = T(F)$
for all $F\in \F$ and for all $x\in \A$.  Wherever it is convenient we assume that $V(x,\cdot)$ is locally bounded for all $x \in \A$ and that $V(\cdot,y)$ is locally Lebesgue-integrable for all $y \in \O$. 
\end{defn}

\begin{defn}[$k$-identifiability]
A functional $T\colon \F\to \A\subseteq\R^k$ is said to be \textit{$k$-identifiable}, if there exists a strict $\F$-identification function for $T$.
\end{defn}

If the dimension $k$ is clear from the context, we say that a functional is elicitable (identifiable) instead of $k$-elicitable ($k$-identifiable).

\begin{rem}
Depending on the class $\F$, some statistical functionals such as quantiles can be set-valued. In such situations, one can define $T\colon\F\to 2^\A$. Then, a scoring function $S\colon \A\times \O \to \R$ is called (strictly) $\F$-consistent for $T$ if $\bar S(t,F)\le \bar S(x,F)$ for all $x\in\A$, $F\in\F$ and $t\in T(F)$ (with equality implying $x \in T(F)$). The definition of a (strict) $\F$-identification function for $T$ can be generalized \textit{mutatis mutandis}.
Many of the results of this paper can be extended to the case of set-valued functionals -- at the cost of a more involved notation and analysis. To allow for a clear presentation, we confine ourselves to functionals with values in $\R^k$ in this paper.
\end{rem}

If $V\colon \A \times \O \to \R^k$ is an $\F$-identification function for a functional $T\colon \F \to \A$ and $h\colon \A\to\R^{k\times k}$ is a matrix-valued function, then the function 
\[
hV\colon \A\times\O \to \R^k, \quad (x,y)\mapsto hV(x,y) :=h(x)V(x,y)
\]
is again an $\F$-identification function for $T$. If $V$ is a strict $\F$-identification function for $T$ and $\det (h(x))\neq 0$ for all $x\in \A$, then $hV$ is also a strict $\F$-identification function for $T$. 

\begin{rem}\label{rem: Orientation}
\cite{SteinwartPasinETAL2014} introduced the notion of an \textit{oriented} strict $\F$-iden\-ti\-fi\-cation function for the case $k=1$ (and $d=1$). They say that $V\colon \A\times \O\to\R$ is an oriented strict $\F$-identification function for the functional $T\colon \F\to\A$ if $V$ is a strict $\F$-identification function for $T$ and moreover
\be{eq:inequ}
\bar V(x,F) > 0  \quad \Longleftrightarrow \quad x > T(F)
\ee
for all $F\in \F$ and for all $x\in \A$. They show -- under some regularity assumptions such as the continuity of the functional $T$ -- that if $V$ is a strict $\F$-identification function for the functional $T$ then either $V$ or $-V$ is oriented; see \citet[Lemma 6]{SteinwartPasinETAL2014}. This notion of orientation can also be generalized to the case $k>1$. 
\end{rem}

\begin{defn}[Orientation]\label{defn:orientation}
Let $T\colon \F\to \A$ be a functional with a strict $\F$-identification function $V\colon \A\times \O\to\R^k$. Then $V$ is called an \textit{oriented} strict $\F$-identification function for $T$ if
\[
v^\top \bar V(T(F) + sv,F)> 0  \quad \Longleftrightarrow \quad s>0
\]
for all $v\in\mathbb S^{k-1} := \{x\in\R^k \colon \|x\|=1\}$, for all $F\in \F$ and for all $s\in \R$ such that $T(F) + sv\in\A$.
\end{defn}
Indeed, the one-dimensional definition of orientation at \eqref{eq:inequ} is nested in Definition \ref{defn:orientation} upon recalling that $\mathbb S^0 = \{-1,1\}$. Under some smoothness assumptions, we can give a necessary condition for the orientation of a strict $\F$-identification function $V$: Assume that the function $\A\to \R^k$, $x\mapsto \bar V(x,F)$ 
is partially differentiable. If $V$ is oriented then the matrix $\big(\partial_l \bar V_r(t,F) \big)_{r,l=1, \ldots, k}$ is positive semi-definite for all $F\in\F$ and $t = T(F)$. It appears to be an open question under which conditions there exists an oriented identification function for an identifiable functional. In the light of Lemma \ref{lem:sufficiency} (ii), Remark \ref{rem:sufficiency} and Proposition \ref{prop: integration} this would give insight whether the construction of a strictly proper scoring function is possible. 

\begin{rem}
Our notion of orientation differs from the one proposed by \cite{FrongilloKash2014b}. In contrast to their definition, our definition is \textit{per se} independent of a (possibly non-existing) strictly consistent scoring function for $T$. Moreover, with respect to Lemma \ref{lem:sufficiency} (ii) and Remark \ref{rem:sufficiency}, the orientation of the gradient of a scoring function implies its strict consistency.
\end{rem}

\subsection{Basic results}

The first lemma gives a sufficient condition for strict consistency and connects the notions of scoring functions and identification functions. 

\begin{lem}\label{lem:sufficiency}
\begin{enumerate}
\item[(i)]
A scoring function $S\colon \A\times \O \to \R$ is strictly $\F$-consistent for $T:\F \to \A\subseteq \R^k$ if and only if the function
\[
\psi\colon D \to\R, \qquad s\mapsto \bar S(t+sv,F)
\]
has a global unique minimum at $s=0$ for all $F\in \F$, $t = T(F)$ and $v\in\mathbb S^{k-1}$ where $D = \{s\in\R\colon t+sv\in\A\}$.
\item[(ii)]
Let $S\colon \A\times \O\to\R$ be a scoring function that is continuously differentiable in its first argument and let $\F'= T^{-1}(\interior(\A))\subseteq \F$. If $\nabla S\colon \interior(\A)\times \O\to \R^k$ is an oriented strict $\F'$-identification function for $T_{|\F'}$ then $S_{|\interior(\A)\times \O}$ is a strictly $\F'$-consistent scoring function for $T_{|\F'}$.
\end{enumerate}
\end{lem}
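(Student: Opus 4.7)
For part (i), the forward direction is essentially a restatement of the definition: if $S$ is strictly $\F$-consistent, then for any admissible $F \in \F$, $t = T(F)$, $v \in \mathbb{S}^{k-1}$, and $s \in D$ with $s\neq 0$, the point $x = t + sv \in \A$ differs from $T(F)$, hence $\bar S(t,F) = \psi(0) < \psi(s) = \bar S(x,F)$. For the converse, pick any $F \in \F$ and $x \in \A$ with $x \neq T(F)$; then set $s := \|x - T(F)\| > 0$ and $v := (x - T(F))/s \in \mathbb{S}^{k-1}$, so that $x = t + sv$ with $t = T(F)$ and $s \in D$. The assumption on $\psi$ yields $\bar S(t,F) < \bar S(x,F)$, which is precisely strict $\F$-consistency.

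For part (ii), the plan is to reduce to part (i) applied to the restricted setting. Fix $F \in \F'$, set $t = T(F) \in \interior(\A)$, fix $v \in \mathbb{S}^{k-1}$, and consider
\[
\psi\colon D \to \R, \quad \psi(s) = \bar S(t+sv, F), \qquad D = \{s \in \R : t+sv \in \interior(\A)\}.
\]
Since $t \in \interior(\A)$, $D$ is an open interval containing $0$. By part (i), it suffices to prove that $\psi$ attains its unique global minimum at $s=0$. The key identity I would use is the one-dimensional fundamental theorem of calculus combined with Fubini's theorem: for $s \in D$,
\[
\psi(s) - \psi(0) = \int \bigl[S(t+sv, y) - S(t, y)\bigr]\diff F(y) = \int \int_0^s v^\top \nabla S(t+uv, y) \diff u \diff F(y),
\]
and, by Fubini, this equals $\int_0^s v^\top \overline{\nabla S}(t + uv, F) \diff u$. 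The exchange is legitimate because $S(\cdot, y)$ is continuously differentiable and by the local boundedness/integrability conventions on $\nabla S$ as an $\F$-identification function (on compact subsets of $D$).

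Now the orientation hypothesis on $\nabla S$ (with the role of $V$ in Definition \ref{defn:orientation}) directly says $v^\top \overline{\nabla S}(t + uv, F) > 0$ iff $u > 0$. Therefore, for $s > 0$ the integrand is strictly positive on $(0, s)$, yielding $\psi(s) > \psi(0)$; for $s < 0$ the integrand is strictly negative on $(s, 0)$, so $\psi(s) - \psi(0) = -\int_s^0 v^\top \overline{\nabla S}(t + uv, F)\diff u > 0$. Hence $\psi$ has a unique global minimum at $s = 0$, and part (i) delivers strict $\F'$-consistency of $S_{|\interior(\A) \times \O}$ for $T_{|\F'}$.

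The main obstacle is the technical justification of the Fubini exchange and of the claim that the sign of $v^\top \overline{\nabla S}$ is preserved under the outer integral over $F$ — both are controlled by the standing local-integrability conventions, but care is needed to verify that on a compact subinterval $[-\varepsilon, \varepsilon] \subset D$ the function $(u,y) \mapsto v^\top \nabla S(t+uv, y)$ is in $L^1(\mathrm{Leb} \otimes F)$. Beyond that, everything is a direct application of the orientation property and the fundamental theorem of calculus.
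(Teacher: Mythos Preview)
Your argument is correct and follows essentially the same route as the paper: reduce (ii) to (i) by showing that the one-dimensional slice $\psi(s)=\bar S(t+sv,F)$ has its unique global minimum at $s=0$, using the sign information on $v^\top\overline{\nabla S}(t+sv,F)$ supplied by orientation. The paper simply writes $\psi'(s)=v^\top\nabla\bar S(t+sv,F)$ and reads off the signs, whereas you go through the fundamental theorem of calculus plus Fubini; these are equivalent, and your explicitness about the interchange is, if anything, a bit more careful than the paper's implicit differentiation under the integral sign. One small inaccuracy: the set $D=\{s\in\R: t+sv\in\interior(\A)\}$ is open and contains $0$, but it need not be an interval unless $\interior(\A)$ is convex; the paper glosses over the same point.
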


\begin{rem}\label{rem:sufficiency}
One can weaken the assumptions of Lemma \ref{lem:sufficiency} (ii) on the smoothness of $S$. Let $S\colon \A\times \O\to\R$ be a scoring function such that $\bar S(\cdot,F)$ is continuously differentiable for all $F \in \F$. If $\F$ consists of absolutely continuous distributions, this is a much weaker requirement; see Section \ref{sec:Osband} for a detailed discussion. Let $\F'= T^{-1}(\interior(\A))\subseteq \F$. If for all $F'\in\F$, $t = T(F)\in\interior(\A)$, for all $v\in\mathbb S^{k-1}$ and for all $s\in \R$ such that $t + sv\in\interior(\A)$ we have that 
\[
v^\top \nabla \bar S(t+sv,F) 
\begin{cases}
>0, &\text{if } s>0 \\
=0, &\text{if } s=0 \\
<0, &\text{if } s<0
\end{cases}
\]
then $S_{|\interior(\A)\times \O}$ is a strictly $\F'$-consistent scoring function for $T_{|\F'}$.
\end{rem}

The following result follows directly from the definition of consistency (Definition \ref{def:consistency}). However, it is crucial to understand many of the results of this paper.

\begin{lem}\label{lem:consistency}
Let $T\colon \F\to\A\subseteq\R^k$ be a functional with a strictly $\F$-consistent scoring function $S\colon \A\times \O\to\R$. Then the following two assertions hold.
\begin{enumerate}
\item[(i)]
Let $\F'\subseteq \F$ and $T_{|\F'}$ be the restriction of $T$ to $\F'$. Then $S$ is also a strictly $\F'$-consistent scoring function for $T_{|\F'}$.
\item[(ii)]
Let $\A'\subseteq \A$ such that $T(\F)\subseteq \A'$ and $S_{|\A'\times \O}$ be the restriction of $S$ to $\A'\times \O$. Then $S_{|\A'\times \O}$ is also a strictly $\F$-consistent scoring function for $T$.
\end{enumerate}
\end{lem}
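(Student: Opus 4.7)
Both parts are essentially bookkeeping from Definition \ref{def:consistency}, and I would present them as short unwrapping-the-definition arguments rather than as genuine mathematical work. There is no real obstacle; the only thing to be careful about is to keep track of which quantifier is being weakened or restricted.

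For part (i), I would start from the defining condition of strict $\F$-consistency of $S$ for $T$: for every $F\in\F$ and every $x\in\A$, $\bar S(T(F),F)\le \bar S(x,F)$, with equality forcing $x=T(F)$. Since $\F'\subseteq\F$, every $F\in\F'$ is in particular an element of $\F$, and the restriction $T_{|\F'}$ agrees with $T$ on $\F'$, so $T_{|\F'}(F)=T(F)$. Thus the same chain of inequalities, read for $F\in\F'$, certifies strict $\F'$-consistency of $S$ for $T_{|\F'}$. The integrability requirement on $S$ also transfers trivially, since $\F$-integrability of $S(x,\cdot)$ implies $\F'$-integrability.

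For part (ii), the restriction is now on the action domain. I would first observe that $S_{|\A'\times\O}$ is still a scoring function: for each $x\in\A'\subseteq\A$, $S_{|\A'\times\O}(x,\cdot)=S(x,\cdot)$ is $\F$-integrable. The hypothesis $T(\F)\subseteq\A'$ ensures that $T(F)\in\A'$ for every $F\in\F$, so $\bar S_{|\A'\times\O}(T(F),F)=\bar S(T(F),F)$ is well-defined. Then, for any $F\in\F$ and any $x\in\A'$, the inequality $\bar S(T(F),F)\le \bar S(x,F)$ is inherited from the strict $\F$-consistency of $S$ on the larger domain $\A$, and the equality case $\bar S(T(F),F)=\bar S(x,F)$ still forces $x=T(F)$ because $\A'\subseteq\A$. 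This gives strict $\F$-consistency of $S_{|\A'\times\O}$ for $T$.

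The hardest aspect, if any, is purely notational: one must remember that in part (ii) the functional $T$ is unchanged and only the set of admissible forecasts $x$ is shrunk, so the hypothesis $T(\F)\subseteq\A'$ is exactly what is needed to ensure that the minimizer of the expected score still lies in the admissible set. Both statements could be collapsed into a single remark, but I would keep them separate for later citation purposes, as the authors do.
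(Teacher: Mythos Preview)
Your proposal is correct and matches the paper's approach: the paper does not give an explicit proof, stating only that the result ``follows directly from the definition of consistency (Definition \ref{def:consistency}).'' Your unwrapping-the-definition argument is exactly what is intended, and your observation about the hypothesis $T(\F)\subseteq\A'$ in part (ii) is the one point worth making explicit.
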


The main results of this paper consist of necessary and sufficient conditions for the strict $\F$-consistency of a scoring function $S$ for some functional $T$. What are the consequences of Lemma \ref{lem:consistency} for such conditions? Assume that we start with a functional $T'\colon \F'\to \A' \subseteq\R^k$ and deduce some \textit{necessary} conditions for a scoring function $S'\colon \A'\times \O\to\R$ to be strictly $\F'$-consistent for $T'$. Then Lemma \ref{lem:consistency} (i) implies that these conditions continue to be necessary conditions for the strict $\F$-consistency of $S'$ for $T\colon \F\to\A'$ where $\F'\subseteq \F$, and $T$ is some extension of $T'$ such that $T(\F)\subseteq \A'$. On the other hand, Lemma \ref{lem:consistency} (ii) implies that the necessary conditions for the strict $\F'$-consistency of a scoring function $S'\colon \A'\times \O\to\R$ continue to be necessary conditions for the strict $\F'$-consistency of $S\colon \A\times \O\to\R$ for $T'$, where $\A'\subseteq \A$ and $S$ is some extension of $S'$.

Summarizing, given a functional $T\colon \F\to\A$, a collection of necessary conditions for the strict $\F$-consistency of scoring functions for $T$ is the more restrictive the smaller the class $\F$ and the smaller the set $\A$ is (provided that $T(\F)\subseteq \A$, of course). Hence, in the forthcoming results concerning necessary conditions, it is no loss of generality to just mention which distributions must necessarily be in the class $\F$ to guarantee the validity of the results. Furthermore, it is no loss of generality to make the assumption that $T$ is surjective, so $\A = T(\F)$.

Some of the subsequent results also provide \textit{sufficient} conditions for the strict $\F$-consistency of a scoring function $S\colon \A\times \O\to\R$ for a functional $T\colon\F\to\A$.
Those results are the stronger the bigger the class $\F$ and the bigger the set $\A$ is. For the notion of \text{elicitability} this means that the assertion that a functional $T\colon \F\to\A$ is elicitable is also the stronger the bigger the class $\F$ and the bigger the set $\A$ is. To demonstrate this reasoning, observe that if the functional $T\colon \F\to \A$ is degenerate in the sense that it is constant, so $T\equiv t$ for some $t\in\A$ (which covers the particular case that $\F$ contains only one element), then $T$ is automatically elicitable with a strictly $\F$-consistent scoring function
$S\colon \A\times \O\to\R$, defined as $S(x,y):= \|x - t\|$.

Strictly consistent scoring functions for a given functional $T$ are not unique. In particular, the following result generalizes directly from the one-dimensional case. Let $S\colon \A\times \O \to \R$ be a strictly $\F$-consistent scoring function a functional $T\colon \F \to \A$. Then, for any $\lambda>0$ and any $\F$-integrable function $a\colon \O\to\R$, the scoring function
\begin{equation}\label{eq:Sequiv}
\widetilde S(x,y) := \lambda S(x,y) +a(y)
\end{equation}
is again strictly $\F$-consistent for $T$. \citet[Theorem 2]{Gneiting2011} shows that in the one-dimensional case under the assumption $S(x,y) \ge 0$, the class of consistent scoring functions is a convex cone. Generally, the assumption of scoring functions being nonnegative is natural if $\delta_y \in \F$ for all $y \in \O$ because for an $\F$-consistent scoring function $S$, the scoring function $\widetilde S(x,y) := S(x,y) - \bar S(T(\delta_y),\delta_y) \ge 0$ and it is of the form \eqref{eq:Sequiv} if $y \mapsto \bar S(T(\delta_y),\delta_y)$ is $\F$-integrable. As we are particularly interested in classes $\F$ of absolutely continuous distributions in this manuscript, we do not require scoring functions to be nonnegative. We generalize \citet[Theorem 2]{Gneiting2011} as follows showing that the class of strictly $\F$-consistent scoring functions for $T$ is a convex cone (not including zero). The proof follows easily using Fubini's theorem and is omitted.

\begin{prop}\label{prop:cone}
Let $T\colon \F\to\A \subseteq \R^k$ be a functional. Let $(Z, \mathcal Z)$ be a measurable space with a $\sigma$-finite measure $\nu$ where $\nu\neq0$. Let $\{S_z\colon z\in Z\}$ be a family of strictly $\F$-consistent scoring functions $S_z\colon \A\times \O\to \R$ for $T$. If for all $x\in\A$ and for all $F\in\F$ the map 
$Z\times \O \to \R$, $(z,y)\mapsto S_z(x,y)$,
is $\nu \otimes F$-integrable, then the scoring function
\[
S\colon \A\times \O\to\R, \qquad (x,y)\mapsto S(x,y) = \int_Z S_z(x,y)\nu(\dint z)
\]
is strictly $\F$-consistent for $T$.
\end{prop}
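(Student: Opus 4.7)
The plan is to reduce everything to the pointwise strict consistency of each $S_z$ via Fubini's theorem. First, I would check that the mixture $S(x,y)=\int_Z S_z(x,y)\,\nu(\dint z)$ is well-defined and $\F$-integrable: by hypothesis, for each $x\in\A$ and $F\in\F$, the map $(z,y)\mapsto S_z(x,y)$ is $\nu\otimes F$-integrable, so $S(x,\cdot)$ is $F$-integrable for each $F\in\F$. Applying Fubini's theorem then gives the key identity
\[
\bar S(x,F)=\int\!\!\int_Z S_z(x,y)\,\nu(\dint z)\,\dint F(y)=\int_Z \bar S_z(x,F)\,\nu(\dint z),
\]
and simultaneously shows measurability of $z\mapsto \bar S_z(x,F)$, which will be needed when invoking positivity of the integral.

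Next I would establish $\F$-consistency. For arbitrary $F\in\F$ and $x\in\A$, strict $\F$-consistency of each $S_z$ yields $\bar S_z(T(F),F)\le \bar S_z(x,F)$ pointwise in $z$, so integrating against $\nu$ and using the identity from the previous step delivers $\bar S(T(F),F)\le \bar S(x,F)$.

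For the strict inequality, suppose $x\in\A$ with $x\neq T(F)$. Then strict $\F$-consistency of each $S_z$ forces $\bar S_z(x,F)-\bar S_z(T(F),F)>0$ for every $z\in Z$. Since $\nu$ is $\sigma$-finite and nonzero, $\nu(Z)>0$, so integrating a strictly positive measurable function against $\nu$ yields a strictly positive value; hence $\bar S(x,F)-\bar S(T(F),F)>0$, which is exactly what we need.

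The only minor subtlety is the strict-inequality step: one must observe that pointwise strict positivity of the integrand on all of $Z$, combined with $\nu\neq 0$, genuinely gives a strictly positive integral (this is where the hypothesis $\nu\neq 0$ enters and cannot be dropped). Everything else is a direct Fubini computation, which is why the authors omit the proof.
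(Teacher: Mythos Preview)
Your argument is correct and matches the paper's intended approach: the authors simply state that the proof follows easily from Fubini's theorem and omit it. Your handling of the strict-inequality step (using $\nu\neq 0$ to conclude that the integral of a pointwise strictly positive measurable function is strictly positive) is the right way to fill in the one nontrivial detail.
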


Point forecasts and probabilistic forecasts are closely related. Probabilistic forecasts, issuing a whole probability distribution, can be evaluated in terms of \textit{scoring rules} \citep{Winkler1996,GneitingRaftery2007}. A scoring rule is a map $R\colon \F\times \O\to\R$ such that for each $G\in\F$, the map $\O\to\R$, $y\mapsto R(G,y)$ is $\F$-integrable. A scoring rule is (strictly) $\F$-proper if $\bar R(F,F)\le \bar R(G,F)$ for all $F,G\in\F$ (with equality implying $F = G$). As in the one-dimensional case \citep[Theorem 3]{Gneiting2011}, each $\F$-consistent scoring function $S$ for a functional $T\colon \F\to\A\subseteq \R^k$ induces an $\F$-proper scoring rule $R$ via
\[
R\colon \F\times \O \to \R, \qquad (F,y)\mapsto R(F,y) = S(T(F),y).
\]
However, if we do not impose that the functional $T$ is injective, we cannot conclude that $R$ is a \textit{strictly} $\F$-proper scoring rule even if the scoring function $S$ is strictly $\F$-consistent. 

Many important statistical functionals are transformations of other statistical functionals, for example variance and first and second moment are related in this manner. The following \textit{revelation principle}, which originates from \citet[p.\,8]{Osband1985} and is also given in \citet[Theorem 4]{Gneiting2011} states that if two functionals are related by a bijection, then one of them is elicitable if and only if the other one is elicitable. The assertion also holds upon replacing `elicitable' with `identifiable'. We omit the proof which is straightforward.

\begin{prop}[Revelation principle]\label{prop: revelation principle}
Let $g\colon \A\to\A'$ be a bijection with inverse $g^{-1}$, where $\A, \A'\subseteq \R^k$. Let $T\colon \F\to\A$ be a functional. Then the following two assertions hold.
\begin{enumerate}
\item[(i)]
The functional $T\colon \F\to \A$ is identifiable if and only if $T_g = g\circ T \colon \F\to\A'$ is identifiable. The function $V\colon \A\times \O\to\R^k$ is a strict $\F$-identification function for $T$ if and only if 
\[
V_g\colon \A'\times \O\to\R^k, \qquad (x',y)\mapsto V_g(x',y) = V(g^{-1}(x'),y)
\]
is a strict $\F$-identification function for $T_g$.
\item[(ii)]
The functional $T\colon \F\to \A$ is elicitable if and only if $T_g = g\circ T \colon \F\to\A'$ is elicitable. The function $S\colon \A\times \O\to\R$ is a strictly $\F$-consistent scoring function for $T$ if and only if 
\[
S_g\colon \A'\times \O\to\R, \qquad (x',y)\mapsto S_g(x',y) = S(g^{-1}(x'),y)
\]
is a strictly $\F$-consistent scoring function for $T_g$.
\end{enumerate}
\end{prop}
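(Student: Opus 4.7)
The proof is essentially a direct verification exercise exploiting the bijectivity of $g$; there is no real obstacle, and in my plan I would simply carry out the change of variable $x' = g(x)$ in both directions.

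For part (i), the plan is to first fix a strict $\F$-identification function $V$ for $T$ and then compute, for any $x' \in \A'$ and $F \in \F$, that
\[
\bar V_g(x',F) = \int V(g^{-1}(x'),y) \diff F(y) = \bar V(g^{-1}(x'),F).
\]
Since $g$ is a bijection, $\bar V_g(x',F) = 0$ is equivalent to $\bar V(g^{-1}(x'),F) = 0$, which by the strict identification property of $V$ is equivalent to $g^{-1}(x') = T(F)$, i.e.\ $x' = g(T(F)) = T_g(F)$. This shows $V_g$ is a strict $\F$-identification function for $T_g$. The converse direction is symmetric: given a strict $\F$-identification function $V'$ for $T_g$, set $V(x,y) := V'(g(x),y)$ and apply the same calculation using $g$ instead of $g^{-1}$ (which is a bijection from $\A$ to $\A'$ with inverse $g^{-1}$, so the hypotheses of the forward direction apply with the roles of $\A$ and $\A'$ swapped). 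In particular, $T$ is identifiable if and only if $T_g$ is.

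For part (ii), the strategy is identical. Assuming $S$ is strictly $\F$-consistent for $T$, I would note that
\[
\bar S_g(x',F) = \bar S(g^{-1}(x'),F)
\]
for all $x' \in \A'$ and $F \in \F$. At $x' = T_g(F) = g(T(F))$ this evaluates to $\bar S(T(F),F)$. For any other $x' \in \A'$, bijectivity of $g$ gives $g^{-1}(x') \neq T(F)$, so by strict consistency of $S$ we have $\bar S(g^{-1}(x'),F) > \bar S(T(F),F) = \bar S_g(T_g(F),F)$. Hence $T_g(F)$ is the unique minimizer of $\bar S_g(\cdot, F)$, i.e.\ $S_g$ is strictly $\F$-consistent for $T_g$. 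The converse follows by replacing $S$ with $S'(x,y) := S_g(g(x),y)$ and repeating the argument with $g$ in place of $g^{-1}$. $\F$-integrability of $S_g$ (resp.\ $V_g$) is immediate from that of $S$ (resp.\ $V$) because the map $x \mapsto S(g^{-1}(x'),\cdot)$ is just $S$ evaluated at the point $g^{-1}(x') \in \A$, for which $\F$-integrability holds by assumption.

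The only conceptual point worth highlighting is that no regularity on $g$ beyond bijectivity is needed -- continuity, measurability of $g^{-1}$ and so on play no role, since everything is stated pointwise in the first argument and the integration is only in $y$. Thus the proof is entirely algebraic, and the writeup can be kept to a few lines in each direction.
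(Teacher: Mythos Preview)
Your proof is correct and is exactly the straightforward verification the paper has in mind; indeed, the paper omits the proof entirely, stating only that it ``is straightforward.'' There is nothing to add or compare.
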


We remark that also \citep[Theorem 5]{Gneiting2011} on weighted scoring functions carries over directly to the higher order case. Furthermore, convexity of level sets continues to be a necessary condition for elicitability.  The result is classical in the literature and was first presented in \citet[Proposition 2.5]{Osband1985}; see also \citet[Theorem 6]{Gneiting2011}.

\begin{prop}[Osband]\label{prop:convex level sets}
Let $T\colon \F\to\A\subseteq\R^k$ be an elicitable functional. Then for all $F_0, F_1\in\F$ with $t:=T(F_0) = T(F_1)$ and for all $\lambda \in(0,1)$ such that $F_\lambda:= (1-\lambda)F_0 + \lambda F_1\in\F$ it holds that $t=T(F_\lambda)$.
\end{prop}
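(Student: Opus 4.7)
The plan is to exploit the linearity of the map $F\mapsto\bar S(x,F)$ in its second argument, which turns a mixture of distributions into a convex combination of expected scores. Once this is in place, the strict consistency at $F_0$ and $F_1$ propagates to the mixture $F_\lambda$ with the common minimizer $t$.

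First I would let $S\colon\A\times\O\to\R$ be any strictly $\F$-consistent scoring function for $T$, which exists by the elicitability assumption. Fix $F_0,F_1\in\F$ with $T(F_0)=T(F_1)=t$ and $\lambda\in(0,1)$ such that $F_\lambda:=(1-\lambda)F_0+\lambda F_1\in\F$. For every $x\in\A$, integration against $F_\lambda$ decomposes as
\[
\bar S(x,F_\lambda)=(1-\lambda)\bar S(x,F_0)+\lambda\bar S(x,F_1),
\]
which is a direct consequence of the definition of $\bar S$ and the linearity of the Lebesgue integral in the underlying measure.

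Next I would apply strict $\F$-consistency of $S$ at $F_0$ and $F_1$: for any $x\in\A$,
\[
\bar S(t,F_0)\le\bar S(x,F_0),\qquad \bar S(t,F_1)\le\bar S(x,F_1),
\]
with equality in either inequality forcing $x=t$. Taking the corresponding convex combination gives $\bar S(t,F_\lambda)\le\bar S(x,F_\lambda)$ for all $x\in\A$, and equality forces equality in both component inequalities, hence $x=t$. This shows that $t$ is the unique minimizer of $\bar S(\cdot,F_\lambda)$.

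Finally, since $F_\lambda\in\F$, strict $\F$-consistency of $S$ for $T$ at $F_\lambda$ means $T(F_\lambda)$ is itself the unique minimizer of $\bar S(\cdot,F_\lambda)$; uniqueness of the minimizer therefore forces $T(F_\lambda)=t$, which is the desired conclusion. There is no real obstacle here: the argument rests entirely on the linearity of expectation in the distribution and on using strict consistency twice — once to identify $t$ as a minimizer at $F_\lambda$, and once to identify $T(F_\lambda)$ as the unique minimizer at $F_\lambda$.
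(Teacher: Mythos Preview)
Your argument is correct and is precisely the classical proof: linearity of $F\mapsto\bar S(x,F)$ turns the mixture into a convex combination of expected scores, so $t$ minimizes $\bar S(\cdot,F_\lambda)$, and strict consistency at $F_\lambda$ forces $T(F_\lambda)=t$. The paper does not reproduce a proof of this proposition but simply attributes it to \citet[Proposition~2.5]{Osband1985} and \citet[Theorem~6]{Gneiting2011}; your write-up matches that standard argument (in fact you prove slightly more than needed, since showing $t$ is \emph{a} minimizer already suffices once you invoke uniqueness of $T(F_\lambda)$).
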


As a last result in this section, we present the intuitive observation that a vector of elicitable functionals itself is elicitable. 
\begin{lem}\label{lem: sum}
Let $k_1,\ldots, k_l\ge1$ and let $T_m\colon \F \to \A_m \subseteq \R^{k_m}$ be a $k_m$-elicitable functional, $m\in\{1, \ldots, l\}$. Then the functional $T = (T_1,\ldots, T_l)\colon$ $\F\to\A$ is $k$-elicitable where $k=k_1+\cdots +k_l$ and $\A = \A_{1}\times \cdots \times \A_{l}\subseteq \R^k$.
\end{lem}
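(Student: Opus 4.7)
The natural strategy is to build a scoring function for $T$ as a sum of the coordinate scoring functions. Since each $T_m$ is $k_m$-elicitable, pick a strictly $\F$-consistent scoring function $S_m\colon \A_m\times\O\to\R$ for $T_m$. Define
\[
S\colon \A\times\O\to\R, \qquad S(x,y) := \sum_{m=1}^l S_m(x_m,y),
\]
where $x=(x_1,\ldots,x_l)\in\A = \A_1\times\cdots\times\A_l$. The plan is to show that $S$ is strictly $\F$-consistent for $T=(T_1,\ldots,T_l)$.

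First, $S$ is $\F$-integrable since each $S_m$ is, so for every $F\in\F$ Fubini-type linearity gives
\[
\bar S(x,F) = \sum_{m=1}^l \bar S_m(x_m,F).
\]
Because the $m$-th summand depends only on $x_m$, the minimization over $x\in\A$ decouples into $l$ independent minimizations over the $\A_m$. Each $\bar S_m(\cdot,F)$ is minimized, uniquely, at $x_m = T_m(F)$ by strict $\F$-consistency of $S_m$. Hence $\bar S(\cdot,F)$ attains its minimum precisely at $x=(T_1(F),\ldots,T_l(F))=T(F)$.

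For the strictness, suppose $\bar S(x,F) = \bar S(T(F),F)$ for some $x\in\A$. Since $\bar S_m(x_m,F)\ge \bar S_m(T_m(F),F)$ componentwise and the sum of the gaps equals zero, each gap must vanish, i.e.\ $\bar S_m(x_m,F) = \bar S_m(T_m(F),F)$ for every $m$. Strict $\F$-consistency of each $S_m$ then forces $x_m = T_m(F)$, so $x = T(F)$. This proves that $S$ is a strictly $\F$-consistent scoring function for $T$, establishing $k$-elicitability.

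There is no real obstacle here; the argument is purely bookkeeping once one observes that additive separability of $S$ decouples the optimization into coordinatewise problems, and that strict $\F$-consistency is preserved under such sums because a sum of nonnegative quantities vanishes only when each summand does.
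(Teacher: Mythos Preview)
Your proof is correct and follows essentially the same approach as the paper: the paper also defines $S(x,y)=\sum_{m=1}^l \lambda_m S_m(x_m,y)$ (with arbitrary positive weights $\lambda_m$, which you take to be $1$) and asserts strict $\F$-consistency. You simply spell out in more detail why the sum is strictly $\F$-consistent, which the paper leaves implicit.
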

\begin{proof}
For $m\in\{1, \ldots, l\}$ let $S_m\colon \A_m\times \O \to \R$ be a strictly $\F$-consistent scoring function for $T_m$. Let $\lambda_1,\ldots, \lambda_l>0$ be positive real numbers. Then
\begin{gather}\label{eqn: sum}
S\colon \A_{1}\times \cdots \times \A_{l} \times \O\to\R, \\
(x_1,\ldots, x_l,y)\mapsto S(x_1,\ldots, x_l,y):= \sum_{m=1}^l \lambda_m S_m(x_m,y)
\nonumber
\end{gather}
is a strictly $\F$-consistent scoring function for $T$.
\end{proof}

A particularly simple and relevant case of Lemma \ref{lem: sum} is the situation $k_1 = \cdots = k_l=1$ such that $k=l$. It is an interesting question whether the scoring functions of the form \eqref{eqn: sum} are the only strictly $\F$-consistent scoring functions for $T$, which amounts to the question of \textit{separability} of scoring rules that was posed by \cite{FrongilloKash2014b}. The answer is generally negative. As mentioned in the introduction, it is known that all Bregman functions elicit $T$, if the components of $T$ are all expectations of transformations of $Y$ \citep{Savage1971,OsbandReichelstein1985,DawidSebastian1999,BanerjeeGuoETAL2005,AbernethyFrongillo2012} or ratios of expectations with the same denominator \citep{FrongilloKash2014b}; see also Corollary \ref{cor:Bregman}. However, for other situations, such as a combination of different quantiles and\,/\,or expectiles, the answer is positive; see Corollary \ref{cor:elicitable components}. These results rely on `Osband's principle' which gives necessary conditions for scoring functions to be strictly $\F$-consistent for a given functional $T$; see Section \ref{sec:Osband}.

There are more involved functionals that are $k$-elicitable than just the mere combination of $k$ 1-elicitable components. To illustrate this with a first example, recall that the variance does not have convex level sets in the sense of Proposition \ref{prop:convex level sets}, whence it is not elicitable. However, we can easily show that the pair (expectation, variance) is 2-elicitable.

\begin{cor}\label{cor:expectation-variance}
Let $\F$ be a class of distribution functions on $\R$ with finite second moments. Then, the functional  $T=(T_1,T_2)\colon \F\to\R^2$, defined as $T_1(F) = \int_\R y\diff F(y)$,  $T_2(F) = \int_\R y^2 \diff F(y) - ( \int_\R y\diff F(y))^2$ is 2-elicitable.
\end{cor}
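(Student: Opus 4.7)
The plan is to combine Lemma \ref{lem: sum} with the revelation principle (Proposition \ref{prop: revelation principle}). The key observation is that the pair (mean, variance) is in bijective correspondence with the pair (first moment, second moment), and the latter is trivially elicitable as a vector of expectations.

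More concretely, I would first consider the auxiliary functional $M = (M_1, M_2)\colon \F \to \A'$ where $M_1(F) = \int y \diff F(y)$ and $M_2(F) = \int y^2 \diff F(y)$, and $\A' = M(\F) \subseteq \{(m_1, m_2) \in \R^2 : m_2 \geq m_1^2\}$ by the Cauchy--Schwarz inequality. Each component is an expectation of an integrable transformation of $Y$, so each is 1-elicitable on $\F$ via the squared-error scoring functions $S_1(x_1, y) = (x_1 - y)^2$ and $S_2(x_2, y) = (x_2 - y^2)^2$, which are well-defined and strictly $\F$-consistent since $\F$ consists of distributions with finite second moments. Lemma \ref{lem: sum} then immediately yields that $M$ is 2-elicitable with strictly $\F$-consistent scoring function, for instance, $S(x_1, x_2, y) = (x_1 - y)^2 + (x_2 - y^2)^2$.

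Next, I would introduce the map $g\colon \A' \to \A := g(\A')$ given by $g(m_1, m_2) = (m_1, m_2 - m_1^2)$, which is a bijection with explicit inverse $g^{-1}(t_1, t_2) = (t_1, t_2 + t_1^2)$. By construction, $(T_1, T_2) = g \circ M$, i.e., $T = g \circ M$ with domain $\F$ and codomain $\A \subseteq \R^2$. Applying Proposition \ref{prop: revelation principle}(ii) to $M$ and $g$ then yields that $T$ is 2-elicitable, with an explicit strictly $\F$-consistent scoring function given by
\[
S_g(t_1, t_2, y) = (t_1 - y)^2 + (t_2 + t_1^2 - y^2)^2.
\]

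There is no real obstacle here; the only mild subtlety is setting up the domain and codomain of $g$ so that it is genuinely a bijection, which is handled by taking $\A = g(\A')$. Since the revelation principle as stated only requires bijectivity between subsets of $\R^k$, this is enough.
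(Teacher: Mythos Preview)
Your overall strategy---show that the pair of moments $(M_1,M_2)$ is $2$-elicitable via Lemma~\ref{lem: sum}, then transfer to $(T_1,T_2)$ via the revelation principle---is exactly the route the paper takes. However, your specific choice of scoring function for the second moment does not work under the stated hypotheses. You propose $S_2(x_2,y)=(x_2-y^2)^2$, but for this to be a scoring function in the sense of Definition~\ref{def:consistency} it must be $\F$-integrable, i.e.\ $\int S_2(x_2,y)\diff F(y)<\infty$ for every $F\in\F$. Expanding gives $\int y^4\diff F(y)$ as one of the terms, and the corollary only assumes finite \emph{second} moments, not finite fourth moments. So $S_2$ is not $\F$-integrable in general, and hence not a strictly $\F$-consistent scoring function for $M_2$ on the class $\F$ as stated.

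This is precisely why the paper replaces the squared loss by the Bregman function built from $\phi(z)=z^2/(1+|z|)$: one has $\phi(z)\le |z|$ and $\phi'$ is bounded, so $S_2(x_2,y)=\phi(y^2)-\phi(x_2)-\phi'(x_2)(y^2-x_2)$ is $\F$-integrable as soon as $\int y^2\diff F(y)<\infty$, while $\phi$ is still strictly convex and hence $S_2$ remains strictly $\F$-consistent for the second moment. Once you swap in such a $\phi$, the rest of your argument goes through verbatim and coincides with the paper's proof.
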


\begin{proof}
Let $\phi\colon \R\to\R$, $z\mapsto \phi(z) = z^2 /(1+|z|)$.
The scoring function $S_1\colon \R\times \R\to \R$, $(x_1, y)\mapsto S_1(x_1,y) = \phi(y) - \phi(x_1) - \phi'(x_1)(y - x_1)$ is a strictly $\F$-consistent scoring function for the expectation and $S_2\colon [0,\infty)\times \R\to\R$, $(x_2,y)\mapsto S_2 (x_2,y) = \phi(y^2) - \phi(x_2) - \phi'(x_2)(y^2 - x_2)$ is a strictly $\F$-consistent scoring function for the second moment. Hence, invoking Lemma \ref{lem: sum}, the pair (expectation, second moment) is 2-elicitable. Using the revelation principle given in Proposition \ref{prop: revelation principle} yields the assertion.
\end{proof}

In Section \ref{sec:spectral}, we show that the concept of $k$-elicitability is not restricted to functionals that can be obtained by combining Lemma \ref{lem: sum} and the revelation principle. It is shown in \citet[Example 3.4]{Weber2006} and \citet[Theorem 11]{Gneiting2011} that the coherent risk measure Expected Shortfall at level $\alpha$, $\alpha\in (0,1)$, does not have convex level sets and is therefore not elicitable. In contrast, we show in Corollary \ref{cor:VaR-ES} that the pair $(\text{\rm Value at Risk}_{\alpha}, \text{\rm Expected Shortfall}_{\alpha})$ is 2-elicitable relative to the class of distributions on $\R$ with finite first moment and unique $\alpha$-quantiles. This refutes Proposition 2.3 of \cite{Osband1985}; see Remark \ref{rem:separability} for a discussion.

\section{Osband's principle}\label{sec:Osband}

In this section, we give necessary conditions for the strict $\F$-consistency of a scoring function $S$ for a functional $T\colon \F\to\A$. In the light of Lemma \ref{lem:consistency} and the discussion thereafter, we have to impose some richness conditions on the class $\F$ as well as on the `variability' of the functional $T$.
To this end, we establish a link between strictly $\F$-consistent scoring functions and strict $\F$-identification functions. We illustrate the idea in the one-dimensional case. Let $\F$ be a class of distribution functions on $\R$, $T\colon \F \to \R$ a functional and $S\colon \R\times \R \to\R$ a strictly $\F$-consistent scoring function for $T$. Furthermore, let $V\colon \R\times \R\to\R$ be an oriented strict $\F$-identification function for $T$. Then, under certain regularity conditions, there is a non-negative function $h\colon \R \to \R$ such that
\be{eqn: Osband naive}
\frac{\dint}{\dint x} S(x,y) = h(x) V(x,y).
\ee
If we na\"ively swap differentiation and expectation and $h$ does not vanish, the form \eqref{eqn: Osband naive} plus the identification property of $V$ are sufficient for the first order condition on $\bar S(\cdot, F)$, $F\in\F$, to be satisfied and the orientation of $V$ as well as the fact that $h$ is positive are sufficient for $\bar S(\cdot, F)$ to satisfy the second order condition for strict $\F$-consistency. So the really interesting part is to show that the form given in \eqref{eqn: Osband naive} is necessary for the strict $\F$-consistency of a scoring function for $T$.

The idea of this characterization originates from \cite{Osband1985}. He gives a characterization including $\R^k$-valued functionals, but for his proof he assumes that $\F$ contains all distributions with finite support. This is not a problem \textit{per se}, but in the light of Lemma \ref{lem:consistency} and the discussion thereafter it would be desirable to weaken this assumption or to complement the result. \cite{Gneiting2011} illustrates Osband's principle in a quite intuitive manner for the one-dimensional case. In \citet[Theorem 5]{SteinwartPasinETAL2014} there is a rigorous statement of Osband's principle for the one-dimensional case. We shall give a proof in the setting of an $\R^k$-valued functional that does not rely on the existence of distributions with finite support in $\F$.

Let $\F$ be a class of distribution functions on $\O\subseteq \R^d$. Fix a functional $T\colon \F \to \A\subseteq\R^k$, an identification function $V\colon \A\times \O\to\R^k$  and a scoring function $S\colon \A\times \O \to \R$.
We introduce the following collection of regularity assumptions.

\setcounter{ass}{0}

\begin{ass}[V]\label{ass:V1}
For every $x\in \interior (\A)$ there are $F_1, \ldots, F_{k+1}\in \F$ such that  
\[
0\in \interior\left( \conv \left(\{ \bar V(x,F_1), \ldots, \bar V(x,F_{k+1}) \}\right)\right).
\]
\end{ass}

\begin{rem}
Assumption (V\ref{ass:V1}) implies that for every $x\in \interior (\A)$ there are $F_1, \ldots, F_k\in \F$ such that the vectors
\(
\bar V(x,F_1), \dots, \bar V(x,F_k)
\)
are linearly independent. 
\end{rem}

Assumption (V\ref{ass:V1}) ensures that the class $\F$ is `rich' enough meaning that the functional $T$ varies sufficiently in order to derive a necessary form of the scoring function $S$ in Theorem \ref{thm: Osband's principle}. We emphasize that assumptions like (V\ref{ass:V1}) are classical in the literature. For the case of $k$-elicitability, \cite{Osband1985} assumes that $0\in \interior \left( \conv \left(\{  V(x,y)\colon y\in \O \}\right)\right)$. 
\citet[Definition 8]{SteinwartPasinETAL2014} and \citet{Lambert2012} treat the case $k=1$ and work under the assumption that the functional is \textit{strictly locally non-constant} which implies assumption (V\ref{ass:V1}) if the functional is identifiable. 

\begin{ass}[V]\label{ass:V0}
For every $F\in \F$, the function 
$\bar V(\cdot,F) \colon \A \to \R^k$, $x\mapsto \bar V(x,F)$,
is continuous.
\end{ass}

\begin{ass}[V]\label{ass:V2}
For every $F\in \F$, the function $\bar V(\cdot, F)$ is continuously differentiable.
\end{ass}

If the function $x\mapsto V(x,y)$, $y\in\O$, is continuous (continuously differentiable), assumption (V\ref{ass:V0}) (assumption (V\ref{ass:V2})) is directly satisfied, and it is even equivalent to (V\ref{ass:V0}) ((V\ref{ass:V2})) if $\F$ contains all measures with finite support. However, (V\ref{ass:V0}) and (V\ref{ass:V2}) are much weaker requirements if we move away from distributions with finite support. To illustrate this fact, let $k=1$ and $V(x,y)= \one\{y\le x\} - \alpha$, $\alpha\in(0,1)$, which is a strict $\F$-identification function for the $\alpha$-quantile. Of course, $V(\cdot, y)$ is not continuous. But if $\F$ contains only probability distributions $F$ that have a continuous derivative $f = F'$, then $\bar V(x,F) = F(x) - \alpha$ and $\frac{\dint}{\dint x} \bar V(x,F) = f(x)$ and $V$ satisfies (V\ref{ass:V0}) and (V\ref{ass:V2}). The following assumptions (S\ref{ass:S1}) and (S\ref{ass:S3}) are similar conditions as (V\ref{ass:V0}) and (V\ref{ass:V2}) but for scoring functions instead of identification functions. 

\setcounter{ass}{0}
\begin{ass}[S]\label{ass:S1}
For every $F\in \F$, the function 
$\bar S(\cdot, F) \colon \A \to \R$, $x\mapsto \bar S(x,F)$,
is continuously differentiable.
\end{ass}

\begin{ass}[S]\label{ass:S3}
For every $F\in \F$, the function $\bar S(\cdot, F)$ is continuously differentiable and the gradient is locally Lipschitz continuous. Furthermore, $\bar S(\cdot, F)$ is twice continuously differentiable at $t = T(F)\in \interior(\A)$. 
\end{ass}

Note that assumption (S\ref{ass:S3}) implies that the gradient of $\bar S(\cdot,F)$ is (totally) differentiable for almost all $x \in \A$ by Rademacher's theorem, which in turn indicates that the Hessian of $\bar S(\cdot,F)$ exists for almost all $x \in \A$ and is symmetric by Schwarz's theorem; see \citet[p. 57]{GrauertFischer1978}.

\begin{thm}[Osband's principle]\label{thm: Osband's principle}
Let $\F$ be a convex class of distribution functions on $\O \subseteq \R^d$. 
Let $T\colon \F \to \A \subseteq \R^k$ be a surjective, elicitable and identifiable functional with a strict $\F$-identification function $V\colon \A \times \O \to \R^k$ and a strictly $\F$-consistent scoring function $S\colon \A\times \O\to \R$. If the assumptions (V\ref{ass:V1}) and (S\ref{ass:S1}) hold, then there exists a matrix-valued function
 $h \colon \interior(\A) \to \R^{k\times k}$ such that for $l \in \{1,\dots,k\}$
\be{eqn: Osband FOC expectation}
\partial_l \bar S(x,F) = \sum_{m=1}^k h_{lm}(x) \bar V_m(x,F)
\ee
for all $x\in \interior(\A)$ and $F\in \F$. If in addition, assumption (V\ref{ass:V0}) holds, then $h$ is continuous. Under the additional assumptions (V\ref{ass:V2}) and (S\ref{ass:S3}), the function $h$ is locally Lipschitz continuous.
\end{thm}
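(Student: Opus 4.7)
My plan is to establish the identity \eqref{eqn: Osband FOC expectation} pointwise in $x\in\interior(\A)$ by a finite dimensional linear algebra argument driven by the first order condition at the unique minimizer of $\bar S(\cdot,F)$, and then to read off the stated regularity of $h$ from an explicit local formula. The guiding observation is that, for each fixed $x$, both $F\mapsto \partial_l\bar S(x,F)$ and $F\mapsto\bar V(x,F)$ are affine linear on the convex set $\F$ (since differentiation in $x$ preserves the affine linearity of $F\mapsto \bar S(x,F)$ inherited from $\bar S(x,F)=\int S(x,y)\,\diff F(y)$ and (S\ref{ass:S1})), so the existence of a matrix $h(x)$ with $\partial_l\bar S(x,F)=\sum_m h_{lm}(x)\bar V_m(x,F)$ amounts to an inclusion of kernels of linear functionals on $\F$.

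First I would fix $x\in\interior(\A)$ and use (V\ref{ass:V1}) to pick $F_1^x,\dots,F_{k+1}^x\in\F$ with $0\in\interior(\conv\{v_1,\dots,v_{k+1}\})$, where $v_i:=\bar V(x,F_i^x)$. By the remark following (V\ref{ass:V1}) I may reorder so that $v_1,\dots,v_k$ are linearly independent. Letting $B\in\R^{k\times k}$ have rows $v_1^\top,\dots,v_k^\top$ and $A\in\R^{k\times k}$ have entries $A_{li}=\partial_l\bar S(x,F_i^x)$ for $i\le k$, I define $h(x)$ as the unique matrix with $h(x)B^\top=A$, equivalently $\partial_l\bar S(x,F_i^x)=\sum_m h_{lm}(x)v_{i,m}$ for $i\le k$.

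The core technical step is the mixture argument. Given any $F\in\F$, choose $t>0$ small enough that $-t\bar V(x,F)\in\conv\{v_1,\dots,v_{k+1}\}$ (possible because $0$ is an interior point), and write $-t\bar V(x,F)=\sum_i\mu_i v_i$ with $\mu_i\ge 0$, $\sum_i\mu_i=1$. By convexity of $\F$ the probability mixture $G:=(1+t)^{-1}(tF+\sum_i\mu_i F_i^x)$ lies in $\F$, and by construction $\bar V(x,G)=0$. Strict identifiability forces $T(G)=x\in\interior(\A)$; strict consistency together with (S\ref{ass:S1}) then yields $\nabla\bar S(x,G)=0$. Affine linearity of $\partial_l\bar S(x,\cdot)$ delivers
\[
t\,\partial_l\bar S(x,F)\;=\;-\sum_{i=1}^{k+1}\mu_i\,\partial_l\bar S(x,F_i^x).
\]
Applying this display with $F=F_{k+1}^x$ and substituting the defining relation of $h(x)$ for $i\le k$, one uses $\sum_{i\le k}\mu_i v_i=-(t+\mu_{k+1})v_{k+1}$ to conclude $\partial_l\bar S(x,F_{k+1}^x)=\sum_m h_{lm}(x)v_{k+1,m}$. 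With $\partial_l\bar S(x,F_i^x)=\sum_m h_{lm}(x)v_{i,m}$ now valid for all $i=1,\dots,k+1$, substituting back into the display and using $\sum_i\mu_i v_i=-t\bar V(x,F)$ gives \eqref{eqn: Osband FOC expectation} at the arbitrary $F\in\F$.

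For the regularity claims, under (V\ref{ass:V0}) the open condition $0\in\interior(\conv\{\bar V(x',F_i^x)\}_i)$ persists in a neighborhood $U$ of $x$, so a single choice $F_1^x,\dots,F_{k+1}^x$ serves throughout $U$; there one has the explicit formula $h(x')=A(x')\,B(x')^{-\top}$ with $A(x')_{li}=\partial_l\bar S(x',F_i^x)$ and $B(x')_{im}=\bar V_m(x',F_i^x)$. Under (V\ref{ass:V0}) and (S\ref{ass:S1}) both $A$ and $B$ are continuous, so $h$ is continuous on $\interior(\A)$; under (V\ref{ass:V2}) and (S\ref{ass:S3}), $B$ is continuously differentiable and $A$ is locally Lipschitz, and since inversion preserves local Lipschitz regularity on the invertible region, $h=AB^{-\top}$ is locally Lipschitz. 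The main obstacle I anticipate is the mixture step itself, which requires arranging that $G$ both lies in $\F$ (needing convexity) and kills $\bar V(x,G)$ (needing $0$ in the \emph{interior} of the convex hull, not merely in it), together with the bookkeeping extending the defining relation of $h(x)$ from the distinguished $F_1^x,\dots,F_k^x$ to $F_{k+1}^x$ and then to all $F\in\F$; once this is in hand, the regularity conclusions are routine matrix calculus.
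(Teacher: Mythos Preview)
Your proposal is correct and follows essentially the same route as the paper: both arguments rest on the implication $\bar V(x,G)=0\Rightarrow\nabla\bar S(x,G)=0$, the affine linearity of $F\mapsto(\partial_l\bar S(x,F),\bar V(x,F))$, and the use of convexity of $\F$ together with the interior condition (V\ref{ass:V1}) to build mixtures; your explicit construction of $G$ and two-step extension to $F_{k+1}^x$ and then to general $F$ is a concrete variant of the paper's kernel equality $\ker(\mathbb B_G)=\ker(\mathbb V_G)$ plus rank--nullity. The regularity step via the local formula $h(x')=A(x')B(x')^{-\top}$ is likewise the same as the paper's.
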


The proof of Theorem \ref{thm: Osband's principle} follows closely the idea of the proof of \citet[Theorem 2.1]{Osband1985}. However, the latter proof only works under the condition that the class $\F$ contains all distributions with finite support. He conjectures that the assertion also holds if $\F$ consists only of absolutely continuous distributions, but we do not believe that his approach is feasible for this case.
To show Theorem \ref{thm: Osband's principle}, we apply a similar technique as in the proof of  \citet[Lemma 2.2]{Osband1985} which is based on a finite-dimensional argument. 

\begin{rem}
Let $\tilde h \colon \A\to \R^{k\times k}$ be a function such that the restriction $\tilde h_{|\interior(\A)} $ to $\interior(\A)$ coincides with the function $h$ in \eqref{eqn: Osband FOC expectation}. Then the function 
\[
{\tilde h}V\colon \A\times \O\to \R^k, \quad (x,y)\mapsto {\tilde h}V(x,y) = \tilde h(x) V(x,y)
\]
is an $\F$-identification function for $T$. If $\det(\tilde h(x))\neq 0$ for all $x\in\A$, then ${\tilde h}V$ is even a strict $\F$-identification function for $T$. However, even if $V$ is oriented, ${\tilde h}V$ is not necessarily an oriented strict $\F$-identification function. 
\end{rem}

Under the conditions of Theorem \ref{thm: Osband's principle}, equation \eqref{eqn: Osband FOC expectation} gives a characterization of the partial derivatives of the expected score. If we impose more smoothness assumptions on the expected score, we are also able to give a characterization of the \textit{second} order derivatives of the expected score. In particular, one has the following result.

\begin{cor}\label{cor: OP SOC}
Let $\F$ be a convex class of distribution functions on $\O \subseteq \R^d$.
For a surjective, elicitable and identifiable functional $T\colon \F\to\A\subseteq\R^k$ with a strict $\F$-identification function $V\colon \A \times \O \to \R^k$ and a strictly $\F$-consistent scoring function $S\colon \A\times \O\to \R$ that satisfy assumptions (V\ref{ass:V1}), (V\ref{ass:V2}) and (S\ref{ass:S3}) we have the following identities for the second order derivatives
\begin{align}\label{eqn: Osband SOC2}
\partial_m \partial_l \bar S(x,F) &= \sum_{i=1}^k  \partial_m h_{li}(x)\bar V_i(x,F) + h_{li}(x)\partial_m \bar V_i(x,F)\\ \nonumber
&= \sum_{i=1}^k  \partial_l h_{mi}(x)\bar V_i(x,F) + h_{mi}(x)\partial_l \bar V_i(x,F) = \partial_l\partial_m \bar S(x,F),
\end{align}
for all $l,m \in\{ 1,\ldots, k\}$, for all $F\in\F$ and almost all $x\in\interior(\A)$, where $h$ is the matrix-valued function appearing at \eqref{eqn: Osband FOC expectation}. In particular, \eqref{eqn: Osband SOC2} holds for $x = T(F) \in \interior(\A)$. 
\end{cor}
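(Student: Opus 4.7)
The plan is to differentiate the first-order characterization \eqref{eqn: Osband FOC expectation} from Theorem \ref{thm: Osband's principle} once more in each coordinate direction, and then to read off symmetry via Schwarz's theorem. Under the combined assumptions (V\ref{ass:V1}), (V\ref{ass:V2}) and (S\ref{ass:S3}) of Theorem \ref{thm: Osband's principle}, the matrix-valued function $h\colon \interior(\A) \to \R^{k \times k}$ is locally Lipschitz continuous, so by Rademacher's theorem $h$ is (totally) differentiable at almost every $x \in \interior(\A)$. At any such point of differentiability, the right-hand side of \eqref{eqn: Osband FOC expectation} is the sum of products of differentiable functions (using (V\ref{ass:V2}) for $\bar V(\cdot,F)$), so the product rule immediately yields the first equality in \eqref{eqn: Osband SOC2}. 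Swapping the roles of $l$ and $m$ and repeating the same argument gives the second equality up to the Hessian symmetry: since (S\ref{ass:S3}) guarantees that $\nabla \bar S(\cdot,F)$ is locally Lipschitz, Schwarz's theorem (in the form stated in the sentence preceding Theorem \ref{thm: Osband's principle}) applies and $\partial_m \partial_l \bar S(\cdot, F) = \partial_l \partial_m \bar S(\cdot, F)$ almost everywhere on $\interior(\A)$.

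The delicate part is the final assertion that \eqref{eqn: Osband SOC2} holds at the point $x = T(F) \in \interior(\A)$ itself, because $T(F)$ need not be a point of differentiability of $h$. Here the identification property $\bar V(T(F), F) = 0$ saves the day. The plan is to compute the partial derivative directly from the definition: by \eqref{eqn: Osband FOC expectation},
\[
\frac{\partial_l \bar S(t + \varepsilon e_m, F) - \partial_l \bar S(t, F)}{\varepsilon}
= \sum_{i=1}^k h_{li}(t + \varepsilon e_m)\, \frac{\bar V_i(t + \varepsilon e_m, F) - \bar V_i(t, F)}{\varepsilon} + \sum_{i=1}^k \frac{h_{li}(t + \varepsilon e_m) - h_{li}(t)}{\varepsilon}\, \bar V_i(t, F),
\]
where $t = T(F)$ and the second sum vanishes identically because $\bar V_i(t, F) = 0$. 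Passing to the limit $\varepsilon \to 0$ using the continuity of $h$ (from Theorem \ref{thm: Osband's principle}) and the differentiability of $\bar V(\cdot, F)$ at $t$ yields $\partial_m \partial_l \bar S(t, F) = \sum_i h_{li}(t)\, \partial_m \bar V_i(t, F)$, which coincides with the right-hand side of \eqref{eqn: Osband SOC2} at $x = t$ because the $\partial_m h_{li}(t) \bar V_i(t, F)$ terms are naturally zero. The symmetry $\partial_m\partial_l = \partial_l \partial_m$ at $t$ now follows from the fact that $\bar S(\cdot, F)$ is twice continuously differentiable at $t$ by (S\ref{ass:S3}), again via Schwarz's theorem.

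The main obstacle I anticipate is precisely this handling of $x = T(F)$: writing the pointwise product rule at a potentially non-differentiable point of $h$. The identification property $\bar V(T(F), F) = 0$ resolves this cleanly, but one must take care to derive the derivative from the difference quotient rather than from a blind product rule. With that subtlety addressed, the rest of the argument is a direct application of Rademacher's and Schwarz's theorems combined with the regularity provided by (V\ref{ass:V2}) and (S\ref{ass:S3}).
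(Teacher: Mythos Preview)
Your proposal is correct and follows precisely the approach the paper intends: the paper does not spell out a separate proof of Corollary \ref{cor: OP SOC} but relies on the remark preceding Theorem \ref{thm: Osband's principle} that (S\ref{ass:S3}) together with Rademacher's and Schwarz's theorems yields existence and symmetry of the Hessian of $\bar S(\cdot,F)$ almost everywhere, so that differentiating \eqref{eqn: Osband FOC expectation} gives \eqref{eqn: Osband SOC2}. Your treatment of the point $x=T(F)$ via the difference quotient and the identification property $\bar V(T(F),F)=0$ is a careful addition that the paper leaves implicit; it is exactly the right way to justify the ``in particular'' clause without assuming differentiability of $h$ at $T(F)$.
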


Theorem \ref{thm: Osband's principle} and Corollary \ref{cor: OP SOC} establish necessary conditions for strictly $\F$-consistent scoring functions on the level of the expected scores. If the class $\F$ is rich enough and the scoring and identification function smooth enough pointwise in the following sense, we can also deduce a necessary condition for $S$ which holds pointwise.

\setcounter{ass}{0}
\begin{ass}[F]\label{ass:F0}
For every $y\in\O$ there exists a sequence $(F_n)_{n \in \mathbb{N}}$ of distributions $F_n \in \F$ that converges weakly to the Dirac-measure $\delta_y$ such that the support of $F_n$ is contained in a compact set $K$ for all $n$. 
\end{ass}

\setcounter{ass}{0}
\begin{ass}[VS]\label{ass:VS}
Suppose that the complement of the set
\[
C := \{(x,y) \in \A \times \O\;|\; \text{$V(x,\cdot)$ and $S(x,\cdot)$ are continuous at the point $y$}\}
\]
has $(k+d)$-dimensional Lebesgue measure zero.
\end{ass}

\begin{prop}\label{prop: integration}
Let $\F$ be convex. Assume that $\interior(\A)\subseteq\R^k$ is a star domain and let $T\colon \F\to\A$ be a surjective, elicitable and identifiable functional with a strict $\F$-identification function $V\colon \A \times \O \to \R^k$ and a strictly $\F$-consistent scoring function $S\colon \A\times \O\to \R$.  
Suppose that assumptions (V\ref{ass:V1}), (V\ref{ass:V0}), (S\ref{ass:S1}), (F\ref{ass:F0}) and (VS\ref{ass:VS}) hold. Let $h$ be the matrix valued function appearing at \eqref{eqn: Osband FOC expectation}. Then, the scoring function $S$ is necessarily of the form
\begin{align}\label{eq:S}
S(x,y) = \sum_{r=1}^k \sum_{m=1}^k \int_{z_r}^{x_r} h_{rm}&(x_1,\dots,x_{r-1},v,z_{r+1},\dots,z_k) \\\nonumber
& \times V_m(x_1,\dots,x_{r-1},v,z_{r+1},\dots,z_k,y)\diff v + a(y)
\end{align}
for almost all $(x,y) \in \A \times \O$ for some star point $z = (z_1, \ldots, z_k)\in\interior(\A)$ and some $\F$-integrable function $a\colon \O\to\R$. On the level of the expected score $\bar S(x,F)$, equation \eqref{eq:S} holds for all $x \in \interior(A)$, $F \in \hat\F$.
\end{prop}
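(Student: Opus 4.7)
The strategy is to apply Osband's principle to extract the gradient structure of $\bar S(\cdot,F)$, recover $\bar S(\cdot,F)$ by an axis-parallel path integral using the fundamental theorem of calculus, and finally pass from the identity on the level of expected scores to a pointwise identity for almost all $(x,y)$ by exploiting the Dirac approximation in (F\ref{ass:F0}) together with the $\F$-a.e.~continuity condition (VS\ref{ass:VS}).

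\emph{Step 1 (Gradient structure).} Since (V\ref{ass:V1}), (V\ref{ass:V0}) and (S\ref{ass:S1}) are assumed, Theorem \ref{thm: Osband's principle} yields a continuous matrix-valued map $h\colon \interior(\A)\to\R^{k\times k}$ with
\[
\partial_l \bar S(x,F) \;=\; \sum_{m=1}^{k} h_{lm}(x)\,\bar V_m(x,F)\qquad\text{for all } x\in\interior(\A),\ F\in\F,\ l=1,\dots,k.
\]

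\emph{Step 2 (Integrate on the level of expected scores).} Define
\[
\tilde S(x,y) := \sum_{r=1}^k\sum_{m=1}^k \int_{z_r}^{x_r} h_{rm}(x_1,\dots,x_{r-1},v,z_{r+1},\dots,z_k)\, V_m(x_1,\dots,x_{r-1},v,z_{r+1},\dots,z_k,y)\,\diff v.
\]
The star-domain hypothesis (together with the assumption $z\in\interior(\A)$) ensures that every segment of the axis-parallel path $z\to x$ lies in $\interior(\A)$, so $h$ is available along the path. Local boundedness of $V(\cdot,\cdot)$ along the compact path and continuity of $h$ make the integrand jointly measurable and absolutely integrable against the product of Lebesgue measure and $F$, so Fubini applies, giving
\[
\bar{\tilde S}(x,F) \;=\; \sum_{r=1}^{k}\int_{z_r}^{x_r}\partial_r \bar S(x_1,\dots,x_{r-1},v,z_{r+1},\dots,z_k,F)\,\diff v \;=\; \bar S(x,F)-\bar S(z,F),
\]
by Step 1 and a telescoping application of the fundamental theorem of calculus along the axis-parallel path from $z$ to $x$. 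This already proves the claimed identity on the level of expected scores with $a(y):=S(z,y)$.

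\emph{Step 3 (Pointwise lifting).} Set $\phi(x,y):=S(x,y)-\tilde S(x,y)-S(z,y)$. By Step 2, $\int \phi(x,y)\,\diff F(y)=0$ for every $x\in\interior(\A)$ and $F\in\F$. Fix $(x_0,y_0)\in\interior(\A)\times\O$ such that both $(x_0,y_0)$ and $(z,y_0)$ lie in the set $C$ of (VS\ref{ass:VS}), which holds for almost every $(x_0,y_0)\in\A\times\O$ since the complement of $C$ has Lebesgue measure zero and $\A\setminus\interior(\A)$ is negligible. By (F\ref{ass:F0}) pick $F_n\in\F$ converging weakly to $\delta_{y_0}$ with supports in a common compact set $K$. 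On $K$, the functions $S(x_0,\cdot),\ S(z,\cdot),\ V_m(\cdot,\cdot)$ are bounded (local boundedness), so $\phi(x_0,\cdot)$ is bounded on $K$; moreover, by continuity of $S$ and $V$ at $y_0$ in their second argument on the relevant slices, together with dominated convergence applied to the integral defining $\tilde S(x_0,\cdot)$, $\phi(x_0,\cdot)$ is continuous at $y_0$. The Portmanteau theorem then yields $\int\phi(x_0,y)\,\diff F_n(y)\to\phi(x_0,y_0)$, and the left-hand side is zero. Hence $\phi(x_0,y_0)=0$, i.e.~$S(x_0,y_0)=\tilde S(x_0,y_0)+a(y_0)$, for almost all $(x_0,y_0)$. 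The function $a(y)=S(z,y)$ is $\F$-integrable since $S(z,\cdot)$ is, completing the proof.

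\emph{Main obstacle.} The genuinely delicate step is the pointwise lifting in Step 3: passing from the identity $\int\phi(x,\cdot)\,\diff F=0$ for all $F\in\F$ to the pointwise statement $\phi(x,y)=0$ a.e. This is where assumptions (F\ref{ass:F0}) and (VS\ref{ass:VS}) are jointly indispensable---compact support of the approximating sequence gives the uniform boundedness required to pass the limit through the integral, and the a.e.~continuity in $y$ provides the Portmanteau hypothesis so that the limit picks out the correct pointwise value. Secondary care is needed in Step 2 to justify Fubini and the pathwise use of FTC, which requires the axis-parallel path from $z$ to $x$ to remain in $\interior(\A)$.
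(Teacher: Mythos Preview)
Your proof follows the same three-step structure as the paper's: Osband's principle for the gradient, a telescoping axis-parallel path integral for the expected score, and weak approximation by Dirac measures via (F\ref{ass:F0}) and (VS\ref{ass:VS}) for the pointwise statement.

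There is one minor technical slip in Step~3. You set $a(y)=S(z,y)$ and then claim that $(z,y_0)\in C$ holds for almost every $(x_0,y_0)$. Assumption (VS\ref{ass:VS}) only guarantees that $C^c$ has $(k+d)$-dimensional Lebesgue measure zero; this does \emph{not} imply that the slice $\{y_0:(z,y_0)\notin C\}$ has $d$-dimensional measure zero for the \emph{fixed} star point $z$, so your justification for continuity of $S(z,\cdot)$ at $y_0$ is not valid as stated. The paper avoids this by not fixing $a(y)=S(z,y)$ up front: it observes that $\bar S(x,F_n)-\bar I(x,F_n)$ is independent of $x$ (it equals $\bar S(z,F_n)$), so wherever $S(x,\cdot)$ and $I(x,\cdot)$ happen to be continuous at $y$ the limit $S(x,y)-I(x,y)$ exists and is automatically $x$-independent, and $a(y)$ is \emph{defined} as this common value. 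This requires no continuity of $S(z,\cdot)$ at all. Your argument is easily repaired by adopting this device.
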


While Theorem \ref{thm: Osband's principle}, Corollary \ref{cor: OP SOC} and Proposition \ref{prop: integration} only establish necessary conditions for strictly $\F$-consistent scoring functions for some functional $T$, often, they guide a way how to construct strictly $\F$-consistent scoring functions starting with a strict $\F$-identification function $V$ for $T$. 
For the one-dimensional case, one can use the fact that, subject to some mild regularity conditions, if $V$ is a strict $\F$-identification function, then either $V$ or $-V$ is oriented; see Remark \ref{rem: Orientation}. Supposing that $V$ is oriented, we can choose any strictly positive function $h\colon \A\to\R$ to get the derivative of a strictly $\F$-consistent scoring function. Then integration yields the desired strictly $\F$-consistent scoring function.

Establishing \textit{sufficient} conditions for scoring functions to be strictly $\F$-consistent for $T$ is generally more involved in the case $k > 1$. First of all, working under assumption (S\ref{ass:S3}), the symmetry of the Hessian $\nabla^2 \bar S(x,F)$ imposes strong necessary conditions on the functions $h_{lm}$; see for example Proposition \ref{prop:elicitable components} which treats the case where all components of the functional $T = (T_1, \ldots, T_k)$ are elicitable and identifiable. The example of spectral risk measures is treated in Section \ref{sec:spectral}. Secondly, \eqref{eqn: Osband FOC expectation} and \eqref{eqn: Osband SOC2} are necessary conditions for $\bar S(x,F)$ having a \textit{local} minimum in $x = T(F)$, $F\in\F$. Even if we additionally suppose that the Hessian $\nabla^2\bar S(x,F)$ is strictly positive definite at $x=T(F)$, this is a sufficient condition only for a \textit{local} minimum at $x=T(F)$, but does not provide any information concerning a \textit{global} minimum. 
Consequently, even if the functions $h_{lm}$ satisfy \eqref{eqn: Osband SOC2}, one must verify the strict consistency of the scoring function on a case by case basis. This can often be done by showing that the one-dimensional functions $\R\to\R$, $s\mapsto \bar S(t+ sv,F)$, with $t=T(F)$, have a global minimum in $s=0$ for all $v\in \mathbb S^{k-1}$ and for all $F\in\F$. This holds for example if the function $(x,y)\mapsto h(x)V(x,y)$ is an oriented strict $\F$-identification function for $T$; see Lemma \ref{lem:sufficiency}. In this step, one may have to impose additional conditions on the functions $h_{lm}$ to ensure sufficiency which cannot always be shown to be necessary.

We conclude this section with a remark clarifying how the function $h$ in Osband's principle behaves under the revelation principle.
\begin{rem}
Let $g :\A \to \A'$ be a bijection, $\A, \A'\subseteq \R^k$. Suppose we have an identification function $V$ for a functional $T\colon \F\to\A$ and we choose the identification function $V_g(x',y) = V(g^{-1}(x'),y)$ as an identification function for the functional $T_g = g \circ T$. If the functional $T$ (and hence also $T_g$ by Proposition \ref{prop: revelation principle}) is elicitable, then the gradient of the expected scores of $T$ and $T_g$ are of the form \eqref{eqn: Osband FOC expectation} with functions $h$ and $h_g$, respectively. The functions $h$ and $h_g$ are connected by the following relation
\[
(h_g)_{lm}(x') = \sum_{r = 1}^k \partial_l (g^{-1})_r(x') h_{rm}(g^{-1}(x')), \quad x' \in \A'.
\]
\end{rem}

\section{Functionals with elicitable components}\label{sec:elicitable components}

Suppose that the functional $T = (T_1,\dots,T_k)\colon \R \to \A \subseteq \R^k$ consists of $1$-elicitable components $T_m$. As prototypical examples of such $1$-elicitable components, we consider the functionals given in Table \ref{table:identification functions} where we implicitly assume that $\O\subseteq \R$ if a quantile or an expectile are a part of $T$. With the given identification functions, it turns out that usually $T$ (or some subset of its components) fulfills either one of the following two assumptions.

\setcounter{ass}{3}
\begin{ass}[V]\label{ass:V4}
Let assumption (V\ref{ass:V2}) hold. For all $r\in\{1, \ldots, k\}$ and for all $t\in\interior(\A)\cap T(\F)$ there are $F_1, F_2 \in T^{-1}(\{t\})$ such that
\begin{align*}
\partial_l \bar V_l(t,F_1) = \partial_l \bar V_l(t,F_2)\quad \forall l\in\{1,\ldots, k\}\setminus \{r\}, && \partial_r \bar V_r(t,F_1) \neq \partial_r \bar V_r(t,F_2).
\end{align*}
\end{ass}

\begin{ass}[V]\label{ass:V5}
Let assumption (V\ref{ass:V2}) hold. For all $F\in \F$ there is a constant $c_F\neq 0$ such that for all $r\in\{1, \ldots, k\}$ and for all $x\in\interior(\A)$ it holds that
\begin{align*}
\partial_r \bar V_r(x,F) =c_F.
\end{align*}
\end{ass}
Following \citet{FrongilloKash2014b}, we call a functional that fulfills assumption (V\ref{ass:V5}) with $c_F = 1$ for all $F\in\F$ a \emph{linear functional}. 

\begin{table}
\centering
\caption{Strict identification functions for $k=1$; see \citet[Table 9]{Gneiting2011}}\label{table:identification functions}
\begin{tabular}{lll}
\hline\hline
Functional					&& Strict identification function \\
\hline
Ratio $\E_F[p(Y)]/\E_F[q(Y)]$	&& $V(x,y) = xq(y) - p(y)$ \\
$\alpha$-Quantile			&& $V(x,y) = \one\{y\le x\} - \alpha$ \\
$\tau$-Expectile			&& $V(x,y) = 2 |\one\{y\le x\} - \tau |(x-y)$ \\
\hline
\end{tabular}
\end{table}

\textit{Prima facie}, assumptions (V\ref{ass:V4}) and (V\ref{ass:V5}) are mutually exclusive.  Considering the functionals in Table \ref{table:identification functions} with the associated identification functions, we obtain, for $x = (x_1, \ldots, x_k)\in \R^k$, $F\in\F$ with derivative $F'=f$ and $m\in\{1,\ldots, k\}$
\[
\partial_m \bar V_m(x,F) =
\begin{cases}
\bar q_m(F), & \text{if } V_m(x,y) = x_mq_m(y) - p_m(y)\\
f(x_m), &\text{if } V_m(x,y) = \one\{y\le x_m\} - \alpha_m \\
(2-4\tau_m)F(x_m) +2\tau_m, &\text{if } V_m(x,y) = 2 |\one\{y\le x_m\} \\
&\hspace{3cm}- \tau_m |(x_m-y),
\end{cases}
\]
where $p_m,q_m\colon \O\to\R$ are some $\F$-integrable functions such that $\bar q_m(F)\neq 0$ for all $F\in\F$ and $\alpha_m,\tau_m\in(0,1)$. We see that (V\ref{ass:V5}) is satisfied if e.g.~$T$ is a vector of ratios of expectations with \textit{the same denominator} (compare the situation in \citet{FrongilloKash2014b}). In this situation, we have that $c_F = \bar q(F)$. On the other hand, if the components of $T$ are quantiles, expectiles with $\tau_m \not=1/2$ or ratios of expectations with \textit{different denominators} and additionally the class $\F$ is rich enough, then (V\ref{ass:V4}) might be satisfied.

\begin{prop}\label{prop:elicitable components} Let $T_m\colon \F\to \A_m\subseteq \R$ be $1$-elicitable and $1$-identifiable functionals with oriented strict $\F$-identification functions $V_m\colon \A_m\times \O\to\R$ for $m\in\{1, \ldots, k\}$. 
Let $\A:=T(\F) \subseteq \A_1\times \cdots \times \A_k$. Then $V\colon \A \times \O \to \R^k$ defined as 
\be{eq:V1}
V(x_1,\dots,x_k,y) = \big(V_1(x_1,y),\dots,V_k(x_k,y)\big)^{\top}
\ee
is an oriented strict $\F$-identification function for $T = (T_1, \ldots, T_k)$.

Let $\F$ be convex and $S\colon \A\times \O \to \R$ be a strictly $\F$-consistent scoring function for $T = (T_1, \ldots, T_k)$. Suppose that assumptions (V\ref{ass:V1}), (V\ref{ass:V2}) and (S\ref{ass:S3}) hold, and let $h:\interior(\A) \to \R^{k\times k}$ be the function given at \eqref{eqn: Osband FOC expectation}. Define $\A_m':= \{x_m \colon \exists (z_1,\dots,z_k)\in \interior(A), z_m=x_m \}$.
\begin{enumerate}
\item[(i)]
If assumption (V\ref{ass:V4}) holds and $\A$ is connected then there are functions $g_m\colon \A_m'\to \R$, $m\in\{1, \ldots, k\}$, $g_m > 0$, such that 
\[
h_{mm}(x_1, \ldots, x_k) = g_m(x_m) 
\]
for all $m\in\{1, \ldots, k\}$ and $(x_1, \ldots, x_k)\in\interior(\A)$ and 
\be{eq:h_rl}
h_{rl}(x)=0
\ee
for all $r,l\in\{1,\ldots, k\}$, $l\neq r$, and for all $x\in\interior(\A)$.
\item[(ii)]
If assumption (V\ref{ass:V5}) holds then 
\begin{align}\label{eq:cyclic}
\partial_l h_{rm}(x) = \partial_r h_{lm}(x), && h_{rl}(x) = h_{lr}(x)
\end{align}
for all $r,l,m\in\{1,\ldots, k\}$, $l\neq r$, where the first identity holds for almost all $x\in \interior(\A)$ and the second identity for all $x\in\interior(\A)$.
Moreover, the matrix $\big(h_{rl}(x)\big)_{l,r=1, \ldots, k}$ is positive definite for all $x\in\interior(\A)$.
\end{enumerate}
\end{prop}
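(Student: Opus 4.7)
The plan is to prove the three claims---the orientation of the stacked $V$, part (i), and part (ii)---by unwinding Osband's principle (Theorem \ref{thm: Osband's principle}) and its refinement Corollary \ref{cor: OP SOC}. Used throughout is that the block structure \eqref{eq:V1} makes $\bar V_i(x,F)$ a function of $x_i$ alone, so $\partial_m \bar V_i(x,F) = 0$ whenever $m \neq i$. The orientation of $V$ is quickest: for $v \in \mathbb{S}^{k-1}$, $s \in \R$ and $t = T(F)$, I would write $v^\top \bar V(t+sv, F) = \sum_m v_m \bar V_m(t_m + sv_m, F)$ and observe that each summand has the sign of $s$ (the sign of $v_m$ cancels the sign of $v_m$ inside $sv_m$ when invoking orientation of $V_m$), so summing---with at least one $v_m \neq 0$---yields the sign of $s$.

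For part (i), first evaluate Corollary \ref{cor: OP SOC} at $x = t = T(F) \in \interior(\A)$: the first-factor terms vanish because $\bar V_i(t, F) = 0$, and only one term survives on each side because $\partial_m \bar V_i(t, F) = 0$ for $m \neq i$, leaving
\[
h_{lm}(t)\,\partial_m \bar V_m(t_m, F) = h_{ml}(t)\,\partial_l \bar V_l(t_l, F) \quad \text{for every } F \text{ with } T(F) = t.
\]
Fix $l \neq m$ and invoke assumption (V\ref{ass:V4}) with $r = l$: the two distributions $F_1, F_2 \in T^{-1}(\{t\})$ supplied by (V\ref{ass:V4}) agree in $\partial_m \bar V_m$ but differ in $\partial_l \bar V_l$, so subtracting the two identities forces $h_{ml}(t) = 0$. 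Since $T$ is surjective onto $\A$, this holds for every $t \in \interior(\A)$, making the off-diagonal entries of $h$ identically zero. Substituting the resulting diagonal structure back into Corollary \ref{cor: OP SOC}, the identity for $l \neq m$ reduces to
\[
\partial_m h_{ll}(x)\,\bar V_l(x_l, F) = \partial_l h_{mm}(x)\,\bar V_m(x_m, F)
\]
for every $F \in \F$ and a.e.\ $x \in \interior(\A)$. Invoking (V\ref{ass:V1}) with convexity of $\F$---convex combinations of the $F_1,\dots,F_{k+1}$ from (V\ref{ass:V1}) let $\bar V(x, F)$ range over an open neighborhood of $0 \in \R^k$---one can pick $F$ with $\bar V_m(x_m, F) = 0$ and $\bar V_l(x_l, F) \neq 0$, forcing $\partial_m h_{ll}(x) = 0$. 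Combined with local Lipschitz continuity of $h$ (final clause of Theorem \ref{thm: Osband's principle}) and connectedness of $\interior(\A)$, this yields $h_{ll}(x) = g_l(x_l)$ on $\A_l'$. For $g_l > 0$: if $g_l(x_l^*) < 0$, take $F$ with $T_l(F) = x_l^*$ (available by $x_l^* \in \A_l'$ plus surjectivity of $T$); the derivative of $s \mapsto \bar S(t + se_l, F)$ equals $g_l(x_l^* + s)\bar V_l(x_l^* + s, F)$ and has the wrong sign near $s = 0$, contradicting strict consistency.

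For part (ii), assumption (V\ref{ass:V5}) makes $\partial_i \bar V_i(x_i, F) = c_F$ a single constant independent of $i$ and $x_i$, so the same $x = t$ specialization of Corollary \ref{cor: OP SOC} gives $c_F\,h_{lm}(t) = c_F\,h_{ml}(t)$; $c_F \neq 0$ then yields the symmetry of $h$ on $\interior(\A)$. Plugging this back into Corollary \ref{cor: OP SOC} at general $x$ cancels the two $c_F$ contributions, leaving
\[
\sum_{i=1}^k \bigl[\partial_m h_{li}(x) - \partial_l h_{mi}(x)\bigr]\bar V_i(x_i, F) = 0
\]
for every $F \in \F$ and a.e.\ $x$. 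Applying (V\ref{ass:V1}) plus convexity of $\F$ once more to let $\bar V(x, F)$ vary over a neighborhood of $0$, each bracketed coefficient must vanish, giving the cyclic identity. For positive (semi)definiteness of $h(x)$, Corollary \ref{cor: OP SOC} at $t = T(F)$ gives $\nabla^2 \bar S(t, F) = c_F\,h(t)$, which must be positive semidefinite at a minimum; orientation of $V$ makes $c_F > 0$ (differentiating $v^\top \bar V(t + sv, F)$ at $s = 0$ yields $c_F \|v\|^2$), so $h(t)$ is positive semidefinite, with strict positive definiteness coming from a finer argument ruling out null directions by exploiting uniqueness of the minimum.

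The cleanest step is the pointwise-at-$t$ simplification, since $\bar V$ vanishes there and kills most of Corollary \ref{cor: OP SOC}. The harder steps I expect are: (a) propagating identities at specific $t$ to all of $\interior(\A)$, which rests on surjectivity of $T$ and Lipschitz continuity of $h$; (b) exploiting (V\ref{ass:V1}) together with convexity of $\F$ to decouple the coefficients of the $\bar V_i$'s, the mechanism that turns scalar identities into coefficient-wise identities; and (c) extracting the \emph{strict} inequalities---$g_l > 0$ in (i) and strict positive definiteness of $h(x)$ in (ii)---from strict consistency alone, since the second-order expansion only gives semidefiniteness and a finer analysis of $\bar S$ is needed to rule out degenerate directions.
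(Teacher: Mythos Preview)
Your approach is essentially the paper's: evaluate Corollary~\ref{cor: OP SOC} at $x=t=T(F)$, use (V\ref{ass:V4}) or (V\ref{ass:V5}) to pin down the off-diagonal structure of $h$ at $t$, push this to all of $\interior(\A)$ by surjectivity, substitute back into \eqref{eqn: Osband SOC2}, and invoke (V\ref{ass:V1}) to separate the $\bar V_i$-coefficients. Your mechanism for the last step (picking $F$ with one component of $\bar V$ zero and another nonzero via convex combinations) is a mild variant of the paper's linear-independence argument, and both work.

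The one real issue is the strict inequalities: $g_m>0$ in (i) and positive definiteness in (ii). You only rule out $g_l(x_l^*)<0$ in (i) and explicitly defer the strict definiteness in (ii) to ``a finer argument.'' The paper's proof instead argues via the sign of $\psi'(s)=v^\top\nabla\bar S(t+sv,F)$ near $s=0$ combined with surjectivity of $T$. But that step is delicate---the neighbourhood on which $\psi'$ has the correct sign may depend on $F$ and shrink as $t$ varies---and in fact the strict inequalities can fail. For the median with $V(x,y)=\one\{y\le x\}-\tfrac12$ and $h(x)=(x-x_0)^2$, the score satisfying $\bar S'(x,F)=(x-x_0)^2(F(x)-\tfrac12)$ is strictly consistent on any class of strictly increasing continuous distributions (the integrand $(u-x_0)^2(F(u)-\tfrac12)$ is nonnegative on $(t,x)$ and vanishes only at $u=x_0$), yet $g_1(x_0)=0$. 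So your honest flag is appropriate: what is provable from strict consistency alone is $g_m\ge0$ (respectively $h$ positive semidefinite), not the strict versions.
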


A direct consequence of Proposition \ref{prop:elicitable components} (i) and Proposition \ref{prop: integration} is the following characterization of the class of strictly $\F$-consistent scoring functions for functionals with elicitable components satisfying assumption (V\ref{ass:V4}). In particular, it gives a characterization of the class of strictly $\F$-consistent scoring functions for a vector of different quantiles and\,/\,or different expectiles (with the exception of the $1/2$-expectile), thus answering a question raised in \citet[p.\,370]{GneitingRaftery2007}. 

\begin{cor}\label{cor:elicitable components}
Let $\F$ be convex.
Suppose that $T = (T_1, \ldots, T_k)\colon \F\to\A$ is a functional with $1$-identifiable components having oriented strict $\F$-identification functions. Assume that the interior of $\A := T(\F) \subseteq \A_1\times \cdots \times \A_k$ is a star domain and that assumptions (V\ref{ass:V1}), (V\ref{ass:V2}), (S\ref{ass:S3}), (F\ref{ass:F0}) and (VS\ref{ass:VS}) hold for $T$. If assumption (V\ref{ass:V4}) holds, then a scoring function $S\colon \A\times \O\to\R$ is strictly $\F$-consistent for $T$ if and only if it is of the form
\be{eq:S sum}
S(x_1, \ldots, x_k,y) = \sum_{m=1}^k S_m(x_m,y),
\ee
for almost all $(x,y) \in \A \times \O$, where $S_m\colon\A_m\times \O\to\R$, $m\in\{1, \ldots, k\}$, are some strictly $\F$-consistent scoring functions for $T_m$.
\end{cor}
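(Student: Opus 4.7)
The \emph{if} direction is immediate: if $S(x,y)=\sum_m S_m(x_m,y)$ with each $S_m$ strictly $\F$-consistent for $T_m$, then Lemma \ref{lem: sum} (with $k_1=\cdots=k_l=1$ and $\lambda_m=1$) yields strict $\F$-consistency of $S$ for $T=(T_1,\dots,T_k)$. The substance is therefore in the \emph{only if} direction.

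Assume $S$ is strictly $\F$-consistent for $T$. Since the hypotheses of Proposition \ref{prop:elicitable components} are all in force (convex $\F$, (V\ref{ass:V1}), (V\ref{ass:V2}), (S\ref{ass:S3}), (V\ref{ass:V4}), and connectedness of $\A$ via the star-domain assumption on $\interior(\A)$), part (i) applies and forces the matrix-valued function $h$ from Osband's principle to be diagonal, with $h_{mm}(x_1,\dots,x_k)=g_m(x_m)$ for strictly positive $g_m\colon \A_m'\to\R$ and $h_{rl}\equiv 0$ for $r\ne l$ on $\interior(\A)$.

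Next, invoke Proposition \ref{prop: integration}, whose remaining hypotheses ((V\ref{ass:V0}), (S\ref{ass:S1}), (F\ref{ass:F0}), (VS\ref{ass:VS}), star-domain) are assumed. Because of the shape of $h$ just obtained and because the $m$-th component of the identification function from \eqref{eq:V1} depends only on $x_m$, every summand with $r\ne m$ in \eqref{eq:S} vanishes, and for $r=m$ the expression $h_{mm}(x_1,\dots,x_{m-1},v,z_{m+1},\dots,z_k)V_m(x_1,\dots,x_{m-1},v,z_{m+1},\dots,z_k,y)$ collapses to $g_m(v)V_m(v,y)$. Hence for almost every $(x,y)\in \A\times\O$,
\[
S(x,y)=\sum_{m=1}^{k}\int_{z_m}^{x_m} g_m(v)\,V_m(v,y)\,\diff v +a(y)
\]
for some $\F$-integrable $a$. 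Absorb $a(y)$ into the first summand and set
\[
S_1(x_1,y):=\int_{z_1}^{x_1} g_1(v)V_1(v,y)\,\diff v + a(y),\qquad S_m(x_m,y):=\int_{z_m}^{x_m} g_m(v)V_m(v,y)\,\diff v\ \ (m\ge 2),
\]
so that $S(x,y)=\sum_{m=1}^{k}S_m(x_m,y)$ for almost all $(x,y)$.

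It remains to verify that each $S_m$ is strictly $\F$-consistent for $T_m$. For fixed $F\in\F$ the expected score satisfies $\partial_{x_m}\bar S_m(x_m,F)=g_m(x_m)\bar V_m(x_m,F)$; since $g_m>0$ and $V_m$ is an \emph{oriented} strict $\F$-identification function for the one-dimensional functional $T_m$, this derivative is strictly positive for $x_m>T_m(F)$, vanishes at $x_m=T_m(F)$, and is strictly negative for $x_m<T_m(F)$. Remark \ref{rem:sufficiency} applied with $k=1$ then gives strict $\F$-consistency of $S_m$ for $T_m$, completing the proof. The main obstacle is conceptual rather than computational: one must exploit the fact that both $h$ and $V$ are simultaneously ``diagonal'' so that the double sum in \eqref{eq:S} decouples into $k$ one-dimensional integrals; the undetermined $a(y)$ term is harmless since it can be attached to any single $S_m$.
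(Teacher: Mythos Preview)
Your proof is correct and follows essentially the same route as the paper: invoke Proposition~\ref{prop:elicitable components}(i) to make $h$ diagonal with strictly positive $g_m$, then Proposition~\ref{prop: integration} to obtain the separated integral form, and finally check that each $S_m$ is strictly $\F$-consistent for $T_m$. The only minor difference is in this last step: the paper verifies strict consistency of $S_m$ directly from the strict consistency of $S$ (varying only the $m$th coordinate of $t=T(F)$ and using that the other summands do not move), whereas you argue via $g_m>0$ and the orientation of $V_m$ through Remark~\ref{rem:sufficiency}. Both arguments are valid; the paper's is marginally shorter since it avoids re-deriving the sign of $\partial_{x_m}\bar S_m$, while yours makes the mechanism (orientation plus positivity of $g_m$) more explicit.
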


If we are in the situation of Proposition \ref{prop:elicitable components} (ii), that is, $T$ satisfies assumption (V\ref{ass:V5}), it is well-known that a statement analogous to Corollary \ref{cor:elicitable components} is false. Let $F \in \F$ and $t = T(F)$. 
Recalling the orientation of the components $V_m$, we can immediately deduce that there is $c_F>0$ such that $\bar V(t+sv,F) = c_F sv$ for $s\in\R$ and $v\in \mathbb S^{k-1}$. Hence, one obtains
\[
v^\top h(t+sv) \bar V(t+sv,F) = c_Fsv^\top h(t+sv)v.
\]
Consequently, if $\A$ is open and convex, the positive definiteness of $h(x)$ for all $x\in\A$ is a sufficient condition for the strict $\F$-consistency of $S$ for $T$ by Lemma \ref{lem:sufficiency} (i). Moreover, we now assume that $T$ is a ratio of expectations with the same denominator $q\colon \O\to \R$ implying that $c_F = \bar q(F)$ for all $\F\in\F$. Using Proposition \ref{prop: integration} and partial integration, we obtain that for almost all $(x,y) \in \A \times \O$ strictly $\F$-consistent scoring functions for $T$ are of the form
\begin{equation}\label{eq:Bregman}
S(x,y) = -\phi(x)q(y) + \sum_{m=1}^k V_m(x,y)\partial_m \phi(x) + a(y), 
\end{equation}
with
\begin{equation}\label{eq:Bregman phi}
\phi(x) = \sum_{r = 1}^k \int_{z_r}^{x_r} \int_{z_r}^v h_{rr}(x_1,\dots,x_{r-1},w,z_{r+1},\dots,z_k)\dint w \dint v,
\end{equation}
where $(z_1, \ldots, z_k)\in \A$ and $a\colon\O\to\R$ is some $\F$-integrable function.
Using \eqref{eq:cyclic}, it follows that the function $\phi$ has Hessian $h$. Therefore, for $\A$ open and convex, $\phi$ is strictly convex. Hence we have shown the following corollary.

\begin{cor}\label{cor:Bregman}
Let $\F$ be convex.
Let $T = (T_1, \ldots, T_k)\colon \F\to\A \subseteq \R^k$ be a ratio of expectations with the same denominator $q\colon \O\to\R$, $q>0$. More specifically, let $T$ be a surjective functional with $1$-identifiable components with oriented strict identification functions $V_m:\A_m \times \O \to \R$, $m\in\{1, \ldots, k\}$, that fulfills assumption (V\ref{ass:V5}). 
Suppose that  $\A \subseteq \A_1\times \cdots \times \A_k$ is open and convex and that assumptions (V\ref{ass:V1}), (V\ref{ass:V2}), (S\ref{ass:S3}), (F\ref{ass:F0}) and (VS\ref{ass:VS}) hold. Then, a scoring function $S$ is strictly $\F$-consistent for $T$ if and only if it is of the form \eqref{eq:Bregman} for almost all $(x,y) \in \A \times \O$ with a twice continuously differentiable strictly convex function $\phi:\A \to \R$ of the form \eqref{eq:Bregman phi} and an $\F$-integrable function $a\colon \O \to \R$.
\end{cor}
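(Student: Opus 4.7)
The plan is to deduce the form \eqref{eq:Bregman} from the preceding machinery in two passes: first establishing necessity by feeding the general integral representation from Proposition \ref{prop: integration} through the structural constraints imposed on $h$ by Proposition \ref{prop:elicitable components} (ii) and the special shape of the identification function, then establishing sufficiency by a direct first-order computation relying on positive definiteness of $h$ and Lemma \ref{lem:sufficiency} (i) (via Remark \ref{rem:sufficiency}).

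For necessity, I would start from Proposition \ref{prop: integration}, which under the running assumptions gives
\[
S(x,y) = \sum_{r,m=1}^k \int_{z_r}^{x_r} h_{rm}(x_1,\dots,x_{r-1},v,z_{r+1},\dots,z_k)\, V_m(x_1,\dots,x_{r-1},v,z_{r+1},\dots,z_k,y)\,dv + a(y),
\]
and substitute $V_m(x,y) = x_m q(y) - p_m(y)$, which is admissible because $T$ is a vector of ratios with common denominator $q$. For each summand I would perform a one-dimensional integration by parts in $v$ treating $V_m$ as the derivative factor, which produces a boundary term proportional to $V_m(x,y)\,\partial_m\phi(x)$ and an interior term whose $x$-dependent part collapses to $-\phi(x)q(y)$ once Fubini is applied and $\phi$ is read off as \eqref{eq:Bregman phi}; any residual $y$-only contributions are absorbed into $a(y)$. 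The symmetry $h_{rl} = h_{lr}$ and the cross-partial relation $\partial_l h_{rm} = \partial_r h_{lm}$ from Proposition \ref{prop:elicitable components} (ii) are precisely what is needed to show that the iterated integrals in \eqref{eq:Bregman phi} define a function of $x$ with $\nabla^2 \phi = h$; and the positive definiteness of $h$ from the same proposition then forces $\phi$ to be strictly convex on the open convex set $\A$, with twice continuous differentiability inherited from (S\ref{ass:S3}).

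For sufficiency, I would start from a function $\phi$ as in \eqref{eq:Bregman phi} with $h = \nabla^2 \phi$ positive definite, and verify directly that $\nabla_x S(x,y) = h(x) V(x,y)$ where $V$ is the identification function of \eqref{eq:V1}. Fixing $F \in \F$, $t = T(F) \in \interior(\A)$, $v \in \mathbb{S}^{k-1}$ and $s$ with $t+sv \in \A$, the linearity assumption (V\ref{ass:V5}) combined with $V_m(x,y) = x_m q(y) - p_m(y)$ gives $\bar V(t+sv, F) = c_F\, s\, v$ with $c_F = \bar q(F) > 0$, so that
\[
v^\top \nabla \bar S(t+sv, F) = v^\top h(t+sv)\,\bar V(t+sv, F) = c_F\, s\, v^\top h(t+sv) v.
\]
By positive definiteness of $h$ this has the sign of $s$, whence Remark \ref{rem:sufficiency} yields strict $\F$-consistency of $S$.

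The main obstacle is the integration-by-parts bookkeeping in the necessity direction: one must show that the double-integral representation of $\phi$ in \eqref{eq:Bregman phi}, built from the $h_{rr}$ alone along a specific axis-parallel path from $z$ to $x$, actually captures all the cross terms $h_{rm}$ for $r \neq m$ once expanded via $V_m(x,y) = x_m q(y) - p_m(y)$. This works precisely because the cyclic identity $\partial_l h_{rm} = \partial_r h_{lm}$ together with $h_{rl} = h_{lr}$ guarantees the existence of a potential $\phi$ for $h$ on the simply connected open set $\A$, and ensures the $p_m$-dependent contributions reorganize into a function of $y$ alone. Tracking regularity carefully throughout and verifying that the almost-everywhere identities promoted from \eqref{eqn: Osband SOC2} suffice to recover the pointwise statement on $\bar S$ (and hence on $S$ up to the $\F$-integrable $a(y)$) is the only real bookkeeping challenge.
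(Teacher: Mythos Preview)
Your proposal is correct and follows essentially the same route as the paper: necessity via Proposition \ref{prop: integration} together with the structural constraints on $h$ from Proposition \ref{prop:elicitable components} (ii), followed by an integration by parts exploiting the affine form $V_m(x,y)=x_m q(y)-p_m(y)$ to reach \eqref{eq:Bregman}, with \eqref{eq:cyclic} ensuring $\nabla^2\phi = h$ and positive definiteness giving strict convexity; sufficiency via the one-line computation $v^\top\nabla\bar S(t+sv,F)=c_F\,s\,v^\top h(t+sv)v$ and Lemma \ref{lem:sufficiency} (the paper cites part (i) directly rather than Remark \ref{rem:sufficiency}, but the content is the same). Your self-identified ``main obstacle'' is exactly the point the paper glosses over with the phrase ``using \eqref{eq:cyclic}, it follows that the function $\phi$ has Hessian $h$''.
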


This corollary recovers results of \citet{OsbandReichelstein1985,BanerjeeGuoETAL2005,AbernethyFrongillo2012} if $T$ is linear (meaning $q\equiv 1$), which show that all consistent scoring functions for linear functionals are so-called \emph{Bregman functions}, that is, functions of the form \eqref{eq:Bregman} with $q \equiv 1$ and a convex function $\phi$. \citet[Theorem 13]{FrongilloKash2014b} also treat the case of more general functions $q$. Comparing these results with Corollary \ref{cor:Bregman}, one can see that on the one hand, they are stronger as they require weaker smoothness assumptions on the scoring function, but on the other hand, they are weaker since they assume that $\F$ contains all one-point distributions $\delta_y$.

\begin{rem}\label{rem:elicitable components}
One might wonder about necessary conditions on the matrix-valued function $h$ in the flavor of Proposition \ref{prop:elicitable components} if the $k$ components of the functional $T$ can be regrouped into (i) a new functional $T'_1\colon \F\to\A'_1\subset \R^{k'_1}$ with an oriented strict $\F$-identification function $V'_1\colon \A'_1\times \O\to \R^{k'_1}$ which satisfies assumption (V\ref{ass:V4}), and (ii) several, say $l$, new functionals $T'_m\colon \F\to\A'_{k'_m}\subseteq \R^{k'_m}$, $m\in\{2,,\ldots, l+1\}$ with oriented strict $\F$-identification functions $V'_m\colon \A'_m\times \O\to \R^{k'_m}$ such that each one satisfies assumption (V\ref{ass:V5}), and $k'_1+ \cdots + k'_{l+1} = k$. We can apply Proposition \ref{prop:elicitable components} to obtain necessary conditions for each of the $(k'_m\times k'_m)$-valued functions $h'_m$, $m\in\{1,\ldots, l+1\}$.
Applying Lemma \ref{lem: sum} we get a possible choice for a strictly $\F$-consistent scoring function $S$ for $T$. On the level of the $k\times k$-valued function $h$ associated to $S$ this means that $h$ is a block diagonal matrix of the form $\text{diag}(h'_1, \ldots, h'_{l+1})$. But what about the necessity of this form?
Indeed, if we assume that the blocks in (ii) have maximal size (or equivalently that $l$ is minimal) then one can verify that $h$ must be necessarily of the block diagonal form described above.
\end{rem}


\section{Spectral risk measures}\label{sec:spectral}

Risk measures are a common tool to measure the risk of a financial position $Y$. A risk measure is usually defined as a mapping $\rho$ from some space of random variables, for example $L^{\infty}$, to the real line. Arguably, the most common risk measure in practice is \emph{Value at Risk} at level $\alpha$ ($\VaR_\alpha$) which is the generalized $\alpha$-quantile $F^{-1}(\alpha)$, that is,
\[
\VaR_{\alpha}(Y) := F^{-1}(\alpha) := \inf\{x \in \R \colon F(x) \ge \alpha\},
\] 
where $F$ is the distribution function of $Y$. 
An important alternative to $\VaR_\alpha$ is \emph{Expected Shortfall} at level $\alpha$ ($\ES_\alpha$) (also known under the names Conditional Value at Risk or Average Value at Risk). It is defined as 
\begin{equation}\label{eq:ES}
\ES_\alpha(Y):= \frac{1}{\alpha} \int_0^\alpha \text{VaR}_u(Y) \diff u, \quad \alpha\in(0,1],
\end{equation}
and $\ES_0(Y) = \essinf Y$. Since the influencial paper of \citet{ArtznerDelbaenETAL1999} introducing coherent risk measures, there has been a lively debate about which risk measure is best in practice, one of the requirements under discussion being the \text{coherence} of a risk measure. We call a functional $\rho$ \emph{coherent} if it is monotone, meaning that $Y\le X$ a.s.~implies that $\rho(Y)\le \rho(X)$; it is superadditive in the sense that $\rho(X+Y)\ge \rho(X) + \rho(Y)$; it is positively homogeneous which means that $\rho(\lambda Y) = \lambda \rho(Y)$ for all $\lambda \ge0$; and it is translation invariant which amounts to $\rho(Y+a) = \rho(Y)+a$ for all $a\in \R$. In the literature on risk measures there are different sign conventions which co-exist. In this paper, a positive value of $Y$ denotes a profit. Moreover, the position $Y$ is considered the more risky the smaller $\rho(Y)$ is. Strictly speaking, we have chosen to work with utility functions instead of risk measures as for example in \citet{Delbaen2012}. The risk measure $\rho$ is called \emph{comonotonically additive} if $\rho(X+Y) = \rho(X) + \rho(Y)$ for comonotone random variables $X$ and $Y$. Coherent and comonotonically additive risk measures are also called \emph{spectral} risk measures \citep{Acerbi2002}. All risk measures of practical interest are law-invariant, that is, if two random variables $X$ and $Y$ have the same law $F$, then $\rho(X) = \rho(Y)$. As we are only concerned with law-invariant risk measures in this paper, we will abuse notation and write $\rho(F) := \rho(X)$, if $X$ has distribution $F$.

One of the main criticisms on $\VaR_\alpha$ is its failure to fulfill the superadditivity property in general \citep{Acerbi2002}. Furthermore, it fails to take the size of losses beyond the level $\alpha$ into account \citep{DanielssoEmbrechtsETAL2001}. In both of these aspects, $\ES_\alpha$ is a better alternative as it is coherent and comonotonically additive, that is, a spectral risk measure. However, with respect to robustness, some authors argue that $\VaR_\alpha$ should be preferred over $\ES_\alpha$ \citep{ContDeguestETAL2010,KouPengETAL2013}, whereas others argue that the classical statistical notions of robustness are not necessarily appropriate in a risk measurement context \citep{KratschmeSchiedETAL2012,KratschmeSchiedETAL2013a,KratschmeSchiedETAL2013}. Finally, $\ES_\alpha$ fails to be $1$-elicitable \citep{Weber2006,Gneiting2011}, whereas $\VaR_\alpha$ is $1$-elicitable for most classes of distributions $\F$ of practial relevance. In fact, except for the expectation, all spectral risk measures fail to be $1$-elicitable \citep{Ziegel2014}; further recent results on elicitable risk measures include \citep{KouPeng2014,WangZiegel2014} showing that distortion risk measures are rarely elicitable and \citep{Weber2006,BelliniBignozzi2014,DelbaenBelliniETAL2014} demonstrating that convex risk measures are only elicitable if they are shortfall risk measures.

We show in Theorem \ref{thm:spectral} (see also Corollary \ref{cor:spectral} and \ref{cor:VaR-ES}) that spectral risk measures having a spectral measure with finite support can be a component of a $k$-elicitable functional. In particular, the pair $(\text{VaR}_\alpha,\,\text{ES}_\alpha)\colon \F\to\R^2$ is 2-elicitable for any $\alpha\in(0,1)$ subject to mild conditions on the class $\F$. We remark that our results substantially generalize the result of \citet{AcerbiSzekely2014} as detailed below.

\begin{defn}[Spectral risk measures]\label{defn:spectral}
Let $\mu$ be a probability measure on $[0,1]$ (called \textit{spectral measure}) and let $\F$ be a class of distribution functions on $\R$ with finite first moments. Then, the \textit{spectral risk measure} associated to $\mu$ is the functional $\nu_\mu\colon \F \to \R$ defined as 
\begin{align*}
\nu_\mu(F):= \int_{[0,1]} \ES_{\alpha}(F)\mu(\dint \alpha).
\end{align*}
\end{defn}

\citet{Kusuoka2001,JouiniSchachermETAL2006} have shown that law-invariant coherent and comonotonically additive risk measures are exactly the spectral risk measures in the sense of Definition \ref{defn:spectral} for distributions with compact support. If $\mu = \delta_{\alpha}$ for some $\alpha\in[0,1]$, then $\nu_\mu(F) = \text{ES}_\alpha(F)$. In particular, $\nu_{\delta_1}(F) =  \int y\diff F(y)$ is the expectation of $F$.

In the following theorem, we show that spectral risk measures whose spectral measure $\mu$ has finite support in $(0,1)$ are $k$-elicitable for some $k$. It is possible to extend the result to spectral measures with finite support in $(0,1]$; see Corollary \ref{cor:spectral}. If $\mu$ has mass at zero, we believe that $\nu_\mu$ is not $k$-elicitable for any $k$ with respect to interesting classes $\F$. In this case, if the support of $F$ is unbounded below, we have $\nu_\mu(F) = \essinf(F) = -\infty$. 

\begin{thm}\label{thm:spectral}
Let $\F$ be a class of distribution functions on $\R$ with finite first moments. 
Let $\nu_\mu\colon \F\to\R$ be a spectral risk measure where $\mu$ is given by
\[
\mu = \sum_{m=1}^{k-1}p_m \delta_{q_m},
\]
with $p_m\in(0,1]$, $\sum_{m=1}^{k-1}p_m=1$, $q_m\in(0,1)$ and the $q_m$'s are pairwise distinct. Define the functional
$T = (T_1, \ldots, T_k)\colon \F \to \R^k$,
where $T_m(F):= F^{-1}(q_m)$, $m \in\{1, \ldots, k-1\}$, and $T_k(F):= \nu_\mu(F)$. Then the following assertions are true:
\begin{enumerate}
\item[(i)] 
If the distributions in $\F$ have unique $q_m$-quantiles, $m\in\{1,\ldots, k-1\}$, then the functional $T$ is $k$-elicitable with respect to $\F$. 

\item[(ii)]
Let $\A\supseteq T(\F)$ be convex and set $\A_r':=\{x_r\colon \exists(z_1, \ldots, z_k)\in\A, x_r=z_r\}$, $r\in\{1, \ldots, k\}$. Define the scoring function $S\colon \A \times \R\to\R$ by
\begin{align}\label{eq:S spectral}
S(x,y) &=
 \sum_{r=1}^{k-1} \big(\one\{y\le x_r\} - q_r \big)G_r(x_r) - \one\{y\le x_r\}G_r(y)\\ \nonumber
&+G_k(x_k) \left(x_k + \sum_{m=1}^{k-1} \frac{p_m}{q_m} \big(\one\{y\le x_m\}(x_m -y) - q_mx_m \big)\right) \\ 
& - \mathcal G_k(x_k) + a(y),\nonumber
\end{align}
where $a\colon \R\to\R$ is $\F$-integrable, $G_r\colon \A_r'\to\R$, $r\in\{1, \ldots, k\}$, $\mathcal G_k\colon \A_k'\to\R$ with $\mathcal G_k'=G_k$ and for all $r\in\{1, \ldots, k\}$ and all $x_r\in\A_r'$ the functions $ \one_{(\infty,x_r]}G_r$ are $\F$-integrable.

If $\mathcal G_k$ is convex and for all $r\in\{1, \ldots, k-1\}$ and $x_k \in \A_k'$, the function
\begin{align}\label{eq:increasing}
\A'_{r,x_k} \to \R, \quad x_r\mapsto\ x_r \frac{p_r}{q_r} G_k(x_k) + G_r(x_r)
\end{align}
with $\A'_{r,x_k}:=\{x_r : \exists (z_1,\dots,z_{k}) \in \A, x_r =z_r, x_k=z_k\}$ is increasing, then $S$ is $\F$-consistent for $T$. If additionally the distributions in $\F$ have unique $q_m$-quantiles, $m \in \{1,\dots,k-1\}$, $\mathcal G_k$ is strictly convex and the functions given at \eqref{eq:increasing} are strictly increasing, then $S$ is strictly $\F$-consistent for $T$.

\item[(iii)]
Assume that the elements of $\F$ have unique $q_m$-quantiles, $m \in \{1,\dots,k-1\}$ and continuous densities. 
Define the function $V\colon \A\times \R \to \R^k$ with components
\begin{align}\label{eq:V spectral}
\begin{split}
V_m(x_1, \ldots, x_k,y) &= \one\{y\le x_m\} - q_m, \ m\in\{1, \ldots, k-1\},\\
V_k(x_1, \ldots, x_k,y) &= x_k - \sum_{m=1}^{k-1} \frac{p_m}{q_m}\,  y\, \one\{y\le x_m\}.
\end{split}
\end{align}
Then $V$ is a strict $\F$-identification function for $T$ satisfying assumption (V\ref{ass:V2}). 

If additionally $\F$ is convex, the interior of $\A:=T(\F)\subseteq\R^k$ is a star domain, (V\ref{ass:V1}) and (F\ref{ass:F0}) hold, and $(V_1,\ldots, V_{k-1})$ satisfies (V\ref{ass:V4}),
then every strictly $\F$-consistent scoring function $S\colon \A\times \R\to\R$ for $T$ satisfying (S\ref{ass:S3}), (VS\ref{ass:VS}) is necessarily of the form given at \eqref{eq:S spectral} almost everywhere. Additionally, $\mathcal G_k$ must be strictly convex and the functions at \eqref{eq:increasing} must be strictly increasing.
\end{enumerate}
\end{thm}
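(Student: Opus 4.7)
The plan is to dispatch the three parts in the order (ii), (i), (iii): (i) is an immediate consequence of (ii) by exhibiting an admissible pair $(G_r, \mathcal G_k)$, whereas (iii) is the most delicate and leans on Osband's principle.

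For part (ii), the key step is an algebraic rewriting that absorbs the cross-terms between $G_k(x_k)$ and the quantile arguments into the quantile pieces. Setting $\tilde G_r(u,x_k) := G_r(u) + G_k(x_k)\,(p_r/q_r)\,u$, which by \eqref{eq:increasing} is increasing in $u$ for each fixed $x_k$, a direct rearrangement gives
\[
S(x,y) = \sum_{r=1}^{k-1}\bigl(\one\{y\le x_r\}-q_r\bigr)\bigl[\tilde G_r(x_r,x_k)-\tilde G_r(y,x_k)\bigr] + G_k(x_k)(x_k-y) - \mathcal G_k(x_k) + b(y),
\]
for some $\F$-integrable $b$. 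For each fixed $x_k$, each summand of the first sum is a generalized piecewise linear scoring function for the $q_r$-quantile with increasing function $\tilde G_r(\cdot,x_k)$, so its expectation is minimized in $x_r$ at $x_r=F^{-1}(q_r)$ (uniquely, under the strict hypotheses). Substituting these values and using the spectral identity $\sum_m (p_m/q_m)\,\E_F[Y\,\one\{Y\le F^{-1}(q_m)\}] = \nu_\mu(F)$ reduces the remainder to $G_k(x_k)(x_k-\nu_\mu(F))-\mathcal G_k(x_k)$ up to $F$-constants; the strict tangent inequality for a strictly convex $\mathcal G_k$ pins its unique minimum at $x_k=\nu_\mu(F)=T_k(F)$. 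Part (i) follows by exhibiting an admissible pair, e.g.\ strictly convex $\mathcal G_k$ with $G_k=\mathcal G_k'$ bounded and $G_r(t)=t$ on a suitable bounded $\A_r'$.

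For part (iii), I first verify that $V$ is a strict $\F$-identification function for $T$: the unique-quantile assumption handles $V_1,\ldots,V_{k-1}$, and the spectral identity yields $\bar V_k(T(F),F)=0$. Continuity of the densities gives (V\ref{ass:V2}). Applying Theorem \ref{thm: Osband's principle} produces $\partial_l \bar S(x,F)=\sum_m h_{lm}(x)\bar V_m(x,F)$ with locally Lipschitz $h$, and Corollary \ref{cor: OP SOC} gives Schwarz symmetry $\partial_l\partial_m\bar S=\partial_m\partial_l\bar S$. Computing $\nabla \bar V(x,F)$ explicitly, its only nontrivial entries are $\partial_l\bar V_l = f(x_l)$, $\partial_l\bar V_k = -(p_l/q_l)\,x_l\,f(x_l)$ for $l<k$, and $\partial_k\bar V_k = 1$. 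Evaluating the symmetry identity at $x=T(F)$ annihilates the $\bar V_i$-terms; then (V\ref{ass:V4}) applied to $(V_1,\ldots,V_{k-1})$, which permits independent variation of $f(T_r(F))$ at fixed $T(F)$, forces $h_{lm}(x)=0$ for $l,m<k$ with $l\ne m$, $h_{lk}(x)=0$ for $l<k$, and the coupling $h_{kl}(x)=(p_l/q_l)\,x_l\,h_{kk}(x)$ for $l<k$. At general $x$ the residual $\bar V_i$-terms, combined with the span condition (V\ref{ass:V1}), yield $\partial_m h_{kk}=0$ and $\partial_m h_{ll}=0$ for $m<k$, $m\ne l$, as well as $\partial_k h_{mm}=(p_m/q_m)\,h_{kk}$, so that $h_{kk}(x)$ depends only on $x_k$ and $h_{mm}(x)=(p_m/q_m)\,\mathcal G_k'(x_k)+\beta_m(x_m)$ for some functions $\beta_m$.

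Feeding this explicit $h$ and $V$ into Proposition \ref{prop: integration} and evaluating the line integrals term by term (integration by parts handles the $(r,m)=(k,k)$ contribution, while the $G_k(z_k)$-contributions from the $r<k$ and $r=k$ integrations combine to cancel all $z_k$-dependence) recovers the form \eqref{eq:S spectral} modulo a $y$-only function, with $G_r$ an antiderivative of $\beta_r$ and $\mathcal G_k$ an antiderivative of $G_k$. Finally, strict consistency of $S$ via Lemma \ref{lem:sufficiency}(i) applied along the coordinate direction $e_k$ at $T(F)$ forces $\mathcal G_k$ to be strictly convex, and applied along $e_r$ it forces the functions at \eqref{eq:increasing} to be strictly increasing. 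The principal obstacle is the structural extraction of $h$ in part (iii): disentangling the Schwarz symmetry jointly with the density-variation freedom granted by (V\ref{ass:V4}) to isolate the characteristic spectral coupling $h_{kl}=(p_l/q_l)\,x_l\,h_{kk}$, and then carefully matching the line-integrated expression of Proposition \ref{prop: integration} to the specific pointwise form \eqref{eq:S spectral} by absorbing the star-point residuals into $a(y)$.
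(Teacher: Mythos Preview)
Your argument is correct and, for parts (i) and (iii), follows essentially the same route as the paper: part (i) is reduced to (ii), and in (iii) you compute $\nabla \bar V$, apply Osband's principle and the Schwarz symmetry from Corollary~\ref{cor: OP SOC} at $x=T(F)$ together with (V\ref{ass:V4}) to isolate the spectral coupling $h_{kl}=(p_l/q_l)\,x_l\,h_{kk}$, $h_{lk}=0$, $h_{lm}=0$ ($l\neq m<k$), then use (V\ref{ass:V1}) at general $x$ to get the remaining first-order PDEs for $h$, integrate via Proposition~\ref{prop: integration}, and finally test along coordinate directions to force strict convexity of $\mathcal G_k$ and strict monotonicity at~\eqref{eq:increasing}. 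This is exactly what the paper does.

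For part (ii) you take a genuinely different, two-stage profile route: you absorb the cross terms into $\tilde G_r(\cdot,x_k)=G_r(\cdot)+G_k(x_k)(p_r/q_r)\,\cdot$, recognize each quantile block as a GPL score minimized at $t_r$ for every fixed $x_k$, and then minimize the resulting Bregman-type expression in $x_k$. The paper instead computes $\bar S(x,F)-\bar S(t,F)$ directly and splits it as $R_1+R_2$ with the auxiliary pivot $w=\min(x_k,t_k)$; $R_1$ collects the quantile contributions (with $G_k(w)$ rather than $G_k(x_k)$) and $R_2$ carries the convexity argument for $\mathcal G_k$ together with a residual term made nonnegative precisely by the choice of $w$. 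Your profile argument is conceptually more transparent---it exposes the conditional-elicitability structure behind the theorem---while the paper's $R_1+R_2$ split avoids evaluating the profiled minimum explicitly and hence sidesteps an implicit integrability issue in your decomposition (the term $q_r G_r(y)$ that gets pushed into $b(y)$ is not assumed $\F$-integrable, only $\one_{(-\infty,x_r]}G_r$ is). This is harmless once you phrase the comparison as $\bar S(x,F)-\bar S(t,F)$, where all such terms cancel; both approaches rest on the same monotonicity hypothesis~\eqref{eq:increasing} and the same tangent inequality for $\mathcal G_k$.
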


\begin{rem}\label{rem:separability}
According to Theorem \ref{thm:spectral}, the pair $(\VaR_\alpha(F),\ES_\alpha(F))$, and more generally $(F^{-1}(q_1),\dots,F^{-1}(q_{k-1}),\nu_\mu(F))$, admits only non-se\-par\-able strictly consistent scoring functions. This result gives an example demonstrating that \citet[Proposition 2.3]{Osband1985} cannot be correct as it states that any strictly consistent scoring function for a functional with a quantile as a component must be separable in the sense that it must be the sum of a strictly consistent scoring function for the quantile and a strictly consistent scoring function for the rest of the functional.
\end{rem}

Using Theorem \ref{thm:spectral} and the revelation principle (Proposition \ref{prop: revelation principle}) we can now state one of the main results of this paper.

\begin{cor}\label{cor:spectral}
Let $\F$ be a class of distribution functions on $\R$ with finite first moments and unique quantiles.
Let $\nu_\mu\colon \F\to\R$ be a spectral risk measure. If the support of $\mu$ is finite with $L$ elements and contained in $(0,1]$, then $\nu_\mu$ is a component of a $k$-elicitable functional where
\begin{enumerate}
\item[(i)]
$k=1$, if $\mu$ is concentrated at 1 meaning $\mu(\{1\})=1$;
\item[(ii)]
$k= 1+L$, if $\mu(\{1\})<1$.
\end{enumerate}
\end{cor}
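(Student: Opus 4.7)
The plan is to reduce both cases to Theorem~\ref{thm:spectral} by means of two classical tools: adding the expectation (which is $1$-elicitable) as an auxiliary component via Lemma~\ref{lem: sum}, and then changing coordinates via the revelation principle (Proposition~\ref{prop: revelation principle}). Case~(i) is immediate: if $\mu=\delta_1$ then $\nu_\mu(F)=\int_0^1 F^{-1}(u)\diff u = \E_F[Y]$, which is a $1$-elicitable functional on the class of distributions with finite first moment (e.g.\ via the sub-quadratic Bregman loss appearing in the proof of Corollary~\ref{cor:expectation-variance}); thus $\nu_\mu$ is trivially a component of itself and $k=1$ works.

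For case~(ii), enumerate $\supp(\mu)=\{q_1,\ldots,q_L\}\subseteq (0,1]$ with weights $p_m=\mu(\{q_m\})>0$. First consider the subcase in which every $q_m$ lies in $(0,1)$. Then Theorem~\ref{thm:spectral}(i), applied with its $k$ equal to $L+1$, asserts that the joint functional $(F^{-1}(q_1),\ldots,F^{-1}(q_L),\nu_\mu(F))$ is $(L+1)$-elicitable, so $\nu_\mu$ is a component of a $(1+L)$-elicitable functional as required.

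Now consider the subcase in which $1\in\supp(\mu)$; relabel so that $q_L=1$ and $q_1,\ldots,q_{L-1}\in(0,1)$, and note that $p_L<1$ by the assumption $\mu(\{1\})<1$. Define the probability measure $\mu' := (1-p_L)^{-1}\sum_{m=1}^{L-1}p_m\delta_{q_m}$ on $(0,1)$. Linearity of $\nu_\mu$ in $\mu$ together with $\ES_1(F)=\E_F[Y]$ yields the decomposition
\[
\nu_\mu(F) = (1-p_L)\,\nu_{\mu'}(F) + p_L\,\E_F[Y].
\]
Since $\mu'$ has $L-1$ support points in $(0,1)$, Theorem~\ref{thm:spectral}(i) gives that $(F^{-1}(q_1),\ldots,F^{-1}(q_{L-1}),\nu_{\mu'}(F))$ is $L$-elicitable; combining this with the $1$-elicitability of $\E_F[Y]$ through Lemma~\ref{lem: sum} shows that $\tilde T := (F^{-1}(q_1),\ldots,F^{-1}(q_{L-1}),\nu_{\mu'}(F),\E_F[Y])$ is $(L+1)$-elicitable. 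Finally, the linear map $g\colon \R^{L+1}\to\R^{L+1}$ defined by $g(x_1,\ldots,x_{L-1},u,v) = (x_1,\ldots,x_{L-1},v,(1-p_L)u+p_L v)$ is a bijection, and $g\circ\tilde T = (F^{-1}(q_1),\ldots,F^{-1}(q_{L-1}),\E_F[Y],\nu_\mu(F))$; Proposition~\ref{prop: revelation principle}(ii) then transfers $(L+1)$-elicitability to this latter functional, exhibiting $\nu_\mu$ as a component.

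The only non-routine step is the handling of the mass at the endpoint $1$: Theorem~\ref{thm:spectral} requires the support of $\mu$ to lie strictly inside $(0,1)$, and the decomposition-plus-revelation trick above is precisely what converts that mass (which by itself produces the expectation) into the desired joint elicitable functional. No further obstacle arises; the bijectivity of $g$ and the arithmetic of $1+L$ in the two subcases are routine.
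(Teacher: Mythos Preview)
Your proof is correct and follows essentially the same approach as the paper: case~(i) is the expectation, case~(ii) with $\supp(\mu)\subset(0,1)$ is a direct application of Theorem~\ref{thm:spectral}(i), and for the subcase with mass at~$1$ you decompose $\nu_\mu=(1-p_L)\nu_{\mu'}+p_L\,\E_F[Y]$, apply Theorem~\ref{thm:spectral}(i) to $\nu_{\mu'}$, adjoin the expectation via Lemma~\ref{lem: sum}, and transfer elicitability through the revelation principle. The paper's argument is the same up to notation; your explicit description of the linear bijection $g$ is slightly more detailed than the paper's.
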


In the special case of $T = (\VaR_\alpha, \ES_\alpha)$, the maximal sensible action domain is $\A_0 = \{x \in \mathbb{R}^2 : x_1 \ge x_2\}$ as we always have $\ES_\alpha(F) \le \VaR_{\alpha}(F)$. For this action domain, the characterization of consistent scoring functions of Theorem \ref{thm:spectral} simplifies as follows.

\begin{cor}\label{cor:VaR-ES}
Let $\alpha \in (0,1)$. Let $\F$ be a class of distribution functions on $\R$ with finite first moments and unique $\alpha$-quantiles. Let $\A_0 = \{x \in \mathbb{R}^2 : x_1 \ge x_2\}$. A scoring function $S\colon \A_0\times \R\to\R$ of the form
\begin{align}\label{eq:S VaR,ES}
S(x_1,x_2,y) &= \big(\one\{y\le x_1\} - \alpha\big)G_1(x_1) - \one\{y\le x_1\}G_1(y) \\ \nonumber
&\quad + G_2(x_2)\left(x_2 - x_1 + \frac{1}{\alpha} \one\{y\le x_1\}(x_1-y) \right) \\ \nonumber
&\quad - \mathcal G_2(x_2)+a(y),
\end{align}
where $G_1, G_2, \mathcal G_2, a\colon \R\to\R$, $\mathcal G_2'=G_2$, $a$ is $\F$-integrable and $\one_{(-\infty, x_1]}G_1$ is $\F$-integrable for all $x_1\in\R$,
is $\F$-consistent for $T$ if $G_1$ is increasing and $\mathcal G_2$ is increasing and convex. If $\mathcal G_2$ is strictly increasing and strictly convex, then $S$ is strictly $\F$-consistent for $T$.

Under the conditions of Theorem \ref{thm:spectral} (iii) all strictly $\F$-consistent scoring functions for $T$ are of the form \eqref{eq:S VaR,ES} almost everywhere.
\end{cor}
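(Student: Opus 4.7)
The plan is to obtain Corollary \ref{cor:VaR-ES} as a direct specialization of Theorem \ref{thm:spectral} to $k = 2$, $p_1 = 1$, $q_1 = \alpha$, so that $\mu = \delta_\alpha$, $\nu_\mu = \ES_\alpha$, and $T = (\VaR_\alpha, \ES_\alpha)$. The action domain $\A_0 = \{x \in \R^2 : x_1 \ge x_2\}$ is the natural maximal one since $\ES_\alpha(F) \le \VaR_\alpha(F)$ implies $T(\F) \subseteq \A_0$, and Lemma \ref{lem:consistency} (ii) permits restricting the candidate scoring functions of Theorem \ref{thm:spectral} to $\A_0 \times \R$. Inserting the chosen parameter values into \eqref{eq:S spectral} collapses the inner bracket via
\[
x_2 + \tfrac{1}{\alpha}\bigl(\one\{y \le x_1\}(x_1-y) - \alpha x_1\bigr) = x_2 - x_1 + \tfrac{1}{\alpha}\one\{y\le x_1\}(x_1-y),
\]
and reduces the outer sums to their $r = 1$ summands, yielding exactly \eqref{eq:S VaR,ES}.

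The next step is to translate the sufficient conditions of Theorem \ref{thm:spectral} (ii) into those stated in the corollary. Condition \eqref{eq:increasing} for $r = 1$, $k = 2$ asks that $x_1 \mapsto \tfrac{x_1}{\alpha} G_2(x_2) + G_1(x_1)$ be increasing in $x_1$ for every fixed $x_2$. Because $\mathcal G_2$ is assumed increasing, $G_2 = \mathcal G_2' \ge 0$, so the first summand is non-decreasing; adding the non-decreasing $G_1(x_1)$ preserves monotonicity, giving $\F$-consistency via Theorem \ref{thm:spectral} (ii). For the strict version, strict convexity of $\mathcal G_2$ makes $G_2$ strictly increasing, and together with strict monotonicity of $\mathcal G_2$ this forces $G_2 > 0$ on the interior of its domain (otherwise a strictly increasing $G_2$ attaining a nonpositive value would yield a subinterval on which $\mathcal G_2$ is strictly decreasing, a contradiction). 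Consequently the above map is strictly increasing in $x_1$, and Theorem \ref{thm:spectral} (ii) delivers strict $\F$-consistency.

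For the necessity claim I would invoke Theorem \ref{thm:spectral} (iii) directly: under the hypotheses stated there, every strictly $\F$-consistent scoring function on $\A_0 \times \R$ must coincide almost everywhere with a function of the form \eqref{eq:S spectral} for the above parameter values, which by the first paragraph is exactly \eqref{eq:S VaR,ES}. The only mildly delicate point in the whole argument is the bridge between the simple separated hypotheses of the corollary ($G_1$ increasing, $\mathcal G_2$ (strictly) increasing and (strictly) convex) and the coupled condition \eqref{eq:increasing} of Theorem \ref{thm:spectral} (ii); this is handled by the $G_2 \ge 0$ observation (respectively the $G_2 > 0$ argument) sketched above, and everything else is routine bookkeeping requiring no substantive new input.
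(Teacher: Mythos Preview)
Your sufficiency argument is correct and spells out what the paper compresses into a single sentence (``The sufficiency follows directly from Theorem~\ref{thm:spectral}''). In particular, your observation that $\mathcal G_2$ strictly increasing and strictly convex forces $G_2>0$ on $\R$, so that $x_1\mapsto (x_1/\alpha)G_2(x_2)+G_1(x_1)$ is strictly increasing once $G_1$ is merely non-decreasing, is exactly the bridge between the decoupled hypotheses of the corollary and the coupled condition \eqref{eq:increasing}.

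For necessity, your reading of the statement as ``strictly consistent $\Rightarrow$ of the algebraic form \eqref{eq:S VaR,ES} for \emph{some} $G_1,G_2$'' is defensible, and on that reading your one-line appeal to Theorem~\ref{thm:spectral}~(iii) is enough. The paper, however, proves more: it shows that the representation can always be \emph{normalised} so that the decoupled sufficient conditions themselves hold. Theorem~\ref{thm:spectral}~(iii) only yields the coupled condition that $x_1\mapsto (x_1/\alpha)G_2(x_2)+G_1(x_1)$ is strictly increasing on $[x_2,\infty)$ for every $x_2$ together with strict convexity of $\mathcal G_2$; this does not force $G_1$ increasing or $G_2\ge 0$ (one could have $G_1$ decreasing, compensated by large positive $G_2$). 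The paper closes this gap by first showing $G_2$ is bounded below (sending $x_2\to-\infty$ in the coupled inequality), setting $C_2=\lim_{x_2\to-\infty}G_2(x_2)$, and then rewriting $S$ in the form \eqref{eq:S VaR,ES} with $\tilde G_1(x_1)=G_1(x_1)+(C_2/\alpha)x_1$ and $\tilde G_2=G_2-C_2\ge 0$; the coupled condition then says precisely that $\tilde G_1$ is increasing. This reparameterisation is the substantive content of the paper's proof and is what makes the characterisation tight (the sufficient conditions are, up to this shift, also necessary). Your proposal omits it.
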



\cite{AcerbiSzekely2014} also give an example of a scoring function for the pair $T = (\text{\rm VaR}_{\alpha}, \text{\rm ES}_{\alpha})\colon \F\to \A\subseteq \R^2$. They use a different sign convention for $\text{\rm VaR}_{\alpha}$ and $\text{\rm ES}_{\alpha}$ than we do in this paper. Using our sign convention, their proposed scoring function $S^W\colon \A\times \R\to\R$ reads
\begin{align}\label{eq:S^W}
S^W(x_1,x_2,y) &= \alpha\big( x_2^2/2 + W x_1^2/2 - x_1 x_2\big) \\ \nonumber
&+ \one\{y\le x_1\} \big(-x_2(y - x_1) +  W(y^2 - x_1^2)/2 \big),
\end{align}
where $W\in\R$. The authors claim that $S^W$ is a strictly $\F$-consistent scoring function for $T=(\text{\rm VaR}_{\alpha}, \text{\rm ES}_{\alpha})$ provided that 
\be{eq:assumption Acerbi}
\text{\rm ES}_{\alpha}(F) > W\, \text{\rm VaR}_{\alpha}(F)
\ee
for all $F\in\F$. This means that they consider a strictly smaller action domain than $\A_0$ in Corollary \ref{cor:VaR-ES}. They assume that the distributions in $\F$ have continuous densities, unique $\alpha$ quantiles, and that $F(x) \in (0,1)$ implies $f(x) > 0$ for all $F \in F$ with density $f$. Furthermore, in order to ensure that $\bar S^W(\cdot,F)$ is finite one needs to impose the assumption that $\int_{-\infty}^x y^2 \dint F(y)$ is finite for all $x\in \R$ and $F \in \F$. This is slightly less than requiring finite second moments. As a matter of fact, they only show that $\nabla \bar S^W(t_1,t_2,F) = 0$ for $F\in\F$ and $(t_1,t_2) = T(F)$ and that $\nabla^2 \bar S^W (t_1,t_2,F)$ is positive definite. This only shows that $\bar S^W(x,F)$ has a \textit{local} minimum at $x = T(F)$ but does not provide a proof concerning a \textit{global} minimum; see also the discussion after Corollary \ref{cor: OP SOC}. 
However, we can use Theorem \ref{thm:spectral} (ii) to verify their claims with $G_1(x_1) = -(W/2)x_1^2$, $\mathcal G_2(x_2) = (\alpha/2) x_2^2$ and $a=0$. Hence, $\mathcal G_2$ is strictly convex, and the function $x_1\mapsto x_1G_2(x_2)/\alpha +G_1(x_1)$ is strictly increasing in $x_1$ if and only if $x_2>Wx_1$ as at \eqref{eq:assumption Acerbi}. 

The scoring function $S^W$ has one property which is potentially relevant in applications. If $x_1,x_2$ and $y$ are expressed in the same units of measurement, then $S^W(x_1,x_2,y)$ is a quantity with these units squared. If one insists that we should only add quantities with the same units, then the necessary condition that $x_1\mapsto x_1G_2(x_2)/\alpha +G_1(x_1)$ is strictly increasing enforces a condition of the type \eqref{eq:assumption Acerbi}. The action domain is restricted for $S^W$ and the choice of $W$ may not be obvious in practice. Similarly, for the maximal action domain $\A_0$, an open question of practical interest is the choice of the functions $G_1$ and $\mathcal G_2$ in \eqref{eq:S VaR,ES}. We would like to remark that $S$ remains stricly consistent upon choosing $G_1 = 0$ and $\mathcal G_2$ stricly increasing and strictly convex.

\section{Discussion}\label{sec:discussion}

We have investigated necessary and sufficient conditions for the elicitability of $k$-di\-men\-sio\-nal functionals of $d$-di\-men\-sio\-nal distributions. In order to derive necessary conditions we have adapted Osband's principle for the case where the class $\F$ of distributions does not necessarily contain distributions with finite support. This comes at the cost of certain smoothness assumptions on the expected scores $\bar S(\cdot,F)$. For particular situations, e.g.~when characterizing the class of strictly $\F$-consistent scoring functions for ratios of expectations, it is possible to weaken the smoothness assumptions; see \cite{FrongilloKash2014b}. However, \cite{FrongilloKash2014b} assume that the
class $\F$ of distributions contains all distributions with finite support, which is not necessary for the validity of our result. While this is not a great gain in the case of linear functionals or ratios of expectations it comes in handy when considering spectral risk measures. Value at Risk, $\VaR_\alpha$, being defined as the smallest $\alpha$-quantile, is generally not elicitable for distributions where the $\alpha$-quantile is not unique. Therefore, we believe that it is also not possible to show joint elicitability of $(\VaR_\alpha,\ES_\alpha)$ for classes $\F$ of distributions with non-unique $\alpha$-quantiles. However, we can give at least consistent scoring functions which become strictly consistent as soon as the elements of $\F$ have unique quantiles. Fortunately, the classes $\F$ of distributions that are relevant in risk management usually consist of absolutely continuous distributions having unique quantiles. 

\citet{EmmerKratzETAL2013} have remarked that $\ES_\alpha$ is \emph{conditionally elicitable}. One can slightly generalize their definition of conditional elicitability as follows. 
\begin{defn}\label{defn:conditional elicitability}
Fix an integer $k\ge1$. A functional $T_k \colon \F \to \A_k \subseteq \R$ is called \emph{conditionally elicitable of order $k$} if there are $k-1$ elicitable functionals $T_m\colon \F\to\A_m\subseteq\R$, $m\in\{1, \ldots, k-1\}$, such that $T_k$ is elicitable restricted to the class
\[
\F_{x_1,\dots,x_{k-1}} := \{F \in \F \colon T_1(F) = x_1,\dots,T_{k-1}(F) = x_{k-1}\}
\]
for any $(x_1,\dots,x_{k-1})  \in \A_1 \times \cdots \times \A_{k-1}$.
\end{defn}
\emph{Mutatis mutandis}, one can define a notion of \emph{conditional identifiability} by replacing the term `elicitable' with `identifiable' in the above definition. It is not difficult to check that any conditionally identifiable functional $T_k$ of order $k$ is a component of an identifiable functional $T = (T_1,\dots,T_k)$. Spectral risk measures $\nu_\mu$ with spectral measure $\mu$ with finite support in $(0,1)$ provide an example of a conditionally elicitable functional of order $L+1$, where $L$ is the cardinality of the support of $\mu$; see Theorem \ref{thm:spectral}. However, we would like to stress that it is generally an open question whether any conditionally elicitable and identifiable functional $T_k$ of order $k\ge2$ is always a component of a $k$-elicitable functional. 

Slightly modifying \citet[Definition 11]{Lambert2008}, one could define the \emph{elicitability order} of a real-valued functional $T$ as the smallest number $k$ such that the functional is a component of a $k$-elicitable functional. It is clear that the elicitability order of the variance is two, and we have shown that the same is true for $\ES_\alpha$ for reasonably large classes $\F$. For spectral risk measures $\nu_\mu$, the elicitability order is at most $L+1$, where $L$ is the cardinality of the support; see Theorem \ref{thm:spectral}. 

In the one-dimensional case, \cite{SteinwartPasinETAL2014} have shown that having convex level sets in the sense of Proposition \ref{prop:convex level sets} is a sufficient condition for elicitability of a functional $T$ under continuity assumptions on $T$. Without such continuity assumptions, the converse of Proposition \ref{prop:convex level sets} is generally false; see \cite{Heinrich2013} for the example of the mode functional. It is an open (and potentially difficult) question under which conditions a converse of Proposition \ref{prop:convex level sets} is true for higher order elicitability.

\section{Proofs}\label{sec:proofs}

\subsubsection*{Proof of Lemma \ref{lem:sufficiency}}

The first part is a direct consequence of the definition of strict $\F$-consistency.
For the second part, we use part (i) and consider $\psi \colon D\to\R$, $s\mapsto \bar S(t+sv,F)$ for $t = T(F)\in\interior(\A)$, $v\in\mathbb S^{k-1}$ and $D = \{s\in\R\colon t+sv\in\interior(\A)\}$. The strict orientation of $\nabla S$ implies that $\psi'(s) = v^\top \nabla \bar S(t+sv,F) = 0$ if $s=0$, $\psi'(s) > 0$ for $s > 0$ and $\psi'(s) < 0$ for $s < 0$.
\qed

\subsubsection*{Proof of Theorem \ref{thm: Osband's principle}}

Let $x\in \interior(\A)$.
The identifiability property of $V$ plus the first order condition stemming from the strict $\F$-consistency of $S$ yields the relation $\bar V(x,F)=0 \implies \nabla \bar S(x,F)=0$ for all $F\in\F$. Let $l \in\{1,\ldots, k\}$. To show \eqref{eqn: Osband FOC expectation},
consider the composed functional
\[
\bar B(x,\cdot)\colon \F \to \R^{k+1}, \quad F\mapsto (\partial_l \bar S(x,F), \bar V(x,F)).
\]
By construction, we know that 
\be{eqn: kernel}
\bar V(x,F)=0 \ \Longleftrightarrow \ \bar B(x,F)=0
\ee
for all $F\in\F$.
Assumption (V\ref{ass:V1}) implies that there are $F_1, \ldots, F_{k+1}\in \F$ such that the matrix
\[
\mathbb V = \mat\big(\bar V(x,F_1), \ldots, \bar V(x,F_{k+1}) \big) \in \R^{k\times (k+1)}
\]
has maximal rank, meaning $\rank(\mathbb V) = k$. If $\rank(\mathbb V) < k$, then $\text{span}\{\bar V(x,F_1), \ldots, \bar V(x,F_{k+1}) \}$ would be a linear subspace such that the interior of $\conv(\{\bar V(x,F_1), \ldots, \bar V(x,F_{k+1}) \})$ would be empty.
Let $G\in\F$. Then still
\(
0\in \interior( \conv(\{\bar V(x,G), \bar V(x,F_1), \ldots, \bar V(x,F_{k+1})\})),
\)
such that $\rank(\mathbb V_G) = k$ where
\[
\mathbb V_G = \mat\big(\bar V(x,G),\bar V(x,F_1), \cdots, \bar V(x,F_{k+1}) \big) \in \R^{k\times (k+2)}.
\]
Define the matrix
\[
\mathbb B_G = \begin{pmatrix}
\partial_l \bar S(x,G) & \partial_l \bar S(x,F_1) & \cdots & \partial_l \bar S(x,F_{k+1}) \\
 & \mathbb V_G 
\end{pmatrix}
\in \R^{(k+1)\times (k+2)}.
\]
We use \eqref{eqn: kernel} to show that $\ker(\mathbb B_G) = \ker(\mathbb V_G)$. 
First observe that the relation $\ker(\mathbb B_G) \subseteq \ker(\mathbb V_G)$ is clear by construction. To show the other inclusion, let $\theta\in\ker(\mathbb V_G)$ be an element of the simplex. Then \eqref{eqn: kernel} and the convexity of $\F$ yields that $\theta \in \ker(\mathbb B_G)$. By linearity, the inclusion holds also for all $\theta\in\ker(\mathbb V_G)$ with nonnegative components. Finally, let $\theta\in \ker(\mathbb V_G)$ be arbitrary. Assumption (V\ref{ass:V1}) implies that there is $\theta^*\in\ker(\mathbb V_G)$ with strictly positive components. Hence, there is an $\varepsilon >0$ such that $\theta^* + \varepsilon \theta$ has nonnegative components. Since $\mathbb V_G(\theta^* + \varepsilon \theta) = \mathbb V_G \theta^* + \varepsilon \mathbb V_G\theta=0$, we know that $\theta^* + \varepsilon \theta\in \ker(\mathbb B_G)$. Again using linearity and the fact that $\theta^*\in\ker(\mathbb B_G)$ we obtain that $\theta \in \ker(\mathbb B_G)$.

With the rank-nullity theorem, this gives $\rank(\mathbb B_G) = \rank(\mathbb V_G) = k$. Hence, there is a unique vector $(h_{l1}(x),\dots,h_{lk}(x)) \in \R^k$ such that 
\[
\partial_l \bar S(x,G) = \sum_{m=1}^k h_{lm}(x) \bar V_m(x,G).
\]
Since $G\in \F$ was arbitrary, the assertion at \eqref{eqn: Osband FOC expectation} follows.

The second part of the claim can be seen as follows. For $x \in \interior(\A)$ pick $F_1,\dots,F_k \in \F$ such that $\bar V(x,F_1),\dots,\bar V(x,F_k)$ are linearly independent and let $\mathbb V(z)$ be the matrix with columns $\bar V(z,F_i)$, $i \in \{1,\dots,k\}$ for $z \in \interior(\A)$. Due to assumption (V\ref{ass:V0}) or (V\ref{ass:V2}), $\mathbb V(z)$ has full rank in some neighborhood $U$ of $x$. Let $r \in \{1,\dots,k\}$ and let $e_r$ be the $r$th standard unit vector of $\R^k$. We define $\lambda(z) := \mathbb V(z)^{-1} e_r$ for $z \in U$. Taking the inverse of a matrix is a continuously differentiable operation, so it is in particular locally Lipschitz continuous. Therefore, the vector $\lambda$ inherits the regularity properties of $\bar V(z,F_i)$, that is, under (V\ref{ass:V0}) $\lambda$ is continuous, and under (V\ref{ass:V2}) $\lambda$ is locally Lipschitz continuous. Therefore, these properties carry over to $h$ because for $l \in \{1,\dots,k\}$, $z \in U$
\[
h_{lr}(z) = \sum_{i=1}^k \lambda_i(z) \sum_{m=1}^k h_{lm}(z) \bar V_m(z,F_i) = \sum_{i=1}^k \lambda_i(z) \partial_l \bar S_m(z,F_i)
\]
using the assumptions on $S$. \qed

\subsubsection*{Proof of Proposition \ref{prop: integration}}

Let $x\in\interior(\A)$, $F\in\F$ and let $z\in\interior(\A)$ be some star point. Using a telescoping argument we obtain
\begin{align*}
\bar S(x,F) - \bar S(z,F) 
&= \bar S(x_1, \ldots, x_k,F) - \bar S(x_1, \ldots, x_{k-1},z_k,F) \\
&\quad+ \bar S(x_1, \ldots, x_{k-1},F) - \bar S(x_1, \ldots, x_{k-2}, z_{k-1},z_k,F) \\
&\quad+\ldots \\
&\quad+ \bar S(x_1,z_2, \ldots, z_k,F) - \bar S(z_1, \ldots,z_k,F) \\
&= \sum_{r=1}^k \int_{z_r}^{x_r} \partial_r \bar S(x_1, \ldots, x_{r-1},v,z_{r+1},\ldots, z_k,F) \diff v.
\end{align*}
Invoking the identity at \eqref{eqn: Osband FOC expectation} yields \eqref{eq:S} for the expected scores with $\bar a(F) = \bar S(z,F)$.
We denote the right hand side of \eqref{eq:S} minus $a(y)$ by $I(x,y)$, hence $\bar I(x,F) = \bar S(x,F) - \bar S(z,F)$.

For almost all $y \in \O$, the set $\{x\in \R^k \;|\; (x,y) \in C^c\} =: A_y$ has $k$-dimensional Lebesgue measure zero, where $C^c$ is the complement of the set $C$ defined in assumption (VS\ref{ass:VS}). Let $y \in \O$ be such that $A_{y}$ has measure zero. Then we obtain that for almost all $x$ the sets $\{x_i \in\R\;|\; (x,y) \in A_{y}\} =: N_i$ have one-dimensional Lebesgue-measure zero for all $i \in \{1,\dots,k\}$. Therefore, $S(x,\cdot)$ and $I(x,\cdot)$ are continuous in $y$ for almost all $x$. 

Let $(F_n)_{n \in \mathbb{N}}$ be a sequence as in assumption (F\ref{ass:F0}), that is, $(F_n)_{n \in \mathbb{N}}$ converges weakly to $\delta_{y}$ and the support of all $F_n$ is contained in some compact set $K$.  Let $\ph$ be a function on $\O$ which is locally bounded and continuous at $y$. 
By the dominated convergence theorem and the continuous mapping theorem we get that then $\int_{\O} \ph \diff F_n \to \ph(y)$. 

By this argument (recalling that $S(x,\cdot)$, $V(x,\cdot)$ are assumed to be locally bounded), if $S(x,\cdot)$ and $I(x,\cdot)$ are continuous at $y$, then $\bar S(x,F_n) - \bar I(x,F_n) \to S(x,y) - I(x,y)$. We have shown that $\bar S(x,F_n) - \bar I(x,F_n)$ does not depend on $x$, hence the same is true for the limit. Therefore, we can define $a(y) = S(x,y) - I(x,y)$ for almost all $y$. The function $a$ is $\F$-integrable, since $S$ and $I$ are $\F$-integrable.
\qed

\subsubsection*{Proof of Proposition \ref{prop:elicitable components}}

It is clear that $V$ given at \eqref{eq:V1} is a strict $\F$-identification function for $T$. Also the orientation of $V$ follows directly from its form and the orientation of its components. We have that $\partial_l \bar V_r(x,F) = 0$ for all $l,r \in\{1, \ldots, k\}$, $l\neq r$, and $x\in\interior(\A)$, $F\in\F$. 
Equation \eqref{eqn: Osband SOC2} evaluated at $x=t=T(F)$ yields
\be{eq:cross-identity}
h_{rl}(t)\partial_l \bar V_l(t,F) = h_{lr}(t) \partial_r \bar V_r(t,F).
\ee
If (V\ref{ass:V4}) holds then \eqref{eq:cross-identity} implies that $h_{rl}(t)=0$ for $r\neq l$, hence we obtain \eqref{eq:h_rl} with the surjectivity of $T$. On the other hand, if (V\ref{ass:V5}) holds, \eqref{eq:cross-identity} implies that $h_{rl}(t) = h_{lr}(t)$, whence the second part of \eqref{eq:cyclic} is shown, again using the surjectivity of $T$.
In both cases, \eqref{eqn: Osband SOC2} is equivalent to
\begin{equation}\label{eq:sim4}
\sum_{m=1}^k  \big(\partial_l h_{rm}(x) - \partial_r h_{lm}(x)\big) \bar V_m(x,F) = 0.
\end{equation}
Using assumption (V\ref{ass:V1}) there are $F_1,\dots,F_k \in \F$ such that $\bar V(x,F_1),\dots,\bar V(x,F_k)$ are linearly independent. This yields that $\partial_l h_{rm}(x) = \partial_r h_{lm}(x)$ for almost all $x \in \interior(\A)$. For the first part of the Proposition, we can conclude that $\partial_l h_{rr}(x) = \partial_r h_{lr}(x) =0$ for $r \not= l$ for almost all $x\in\interior(\A)$. Consequently, invoking that $\A$ is connected, the functions $h_{mm}$ only depend on $x_m$ and we can write $h_{mm}(x) = g_m(x_m)$ for some function $g_m\colon \A_m' \to \R$. By Lemma \ref{lem:sufficiency} (i), for $v \in \mathbb{S}^{k-1}$, $t = T(F) \in \interior(\A)$, the function $s \mapsto \bar S(t+ sv,F)$ has a global unique minimum at $s = 0$, hence 
\[
v^{\top} \nabla \bar S(t + sv,F) = \sum_{m=1}^k g_{m}(t_m + sv_m) \bar V_m (t_m + sv_m, F) v_m
\]
vanishes for $s = 0$, is negative for $s < 0$ and positive for $s > 0$, where $s$ is in some neighborhood of zero. Choosing $v$ as the $l$th standard basis vector of $\R^k$ we obtain that $g_{l} > 0$ exploiting the orientation of $V_l$ and the surjectivity of $T$.

For the second part of the proposition, to show the assertion about the definiteness, observe that due to assumption (V\ref{ass:V5}), we have for $v \in \mathbb{S}^{k-1}$, $t = T(F) \in \interior(\A)$ that $\bar V_m(t + sv,F) = c_F s v_m$ where $c_F > 0$ due to assumption (V\ref{ass:V5}) and the orientation of each component of $V$. Hence, $v^{\top} \nabla \bar S(t + sv,F) = c_F \,s v^{\top} h(t + sv) v$,
which implies the claim using again the surjectivity of $T$.
\qed

\subsubsection*{Proof of Corollary \ref{cor:elicitable components}}

The sufficiency is immediate; see the proof of Lemma \ref{lem: sum}. For the necessity, we apply Proposition \ref{prop: integration} and Proposition \ref{prop:elicitable components} to obtain that there are positive functions $g_m$ and an $\F$-integrable function $a$ such that 
\[
S(x,y) = \sum_{m=1}^k \int_{z_m}^{x_m} g_m(v) V_m(v,y)\diff v + a(y),
\]
for almost all $(x,y) \in \A \times \O$, where $z\in\interior(\A)$ is a star point of $\interior(\A)$. Let $t = T(F)$ and $x_m \not=t_m$. The strict consistency of $S$ implies that $\bar S(t,F) < \bar S(t_1,\dots,t_{m-1},x_m,t_{m+1},\dots,t_m)$. This means $\bar S_m(t_m,F) < \bar S_m(x_m,F)$ with $S_m(x_m,y):= \int_{z_m}^{x_m} g_m(v) V_m(v,y)\diff v + \frac{1}{k}a(y)$.
\qed

\subsubsection*{Proof of Theorem \ref{thm:spectral}}

(i) The second part of Theorem \ref{thm:spectral} (ii) implies the $k$-elicitability of $T$.

(ii) Let $S\colon\A\times \R\to\R$ be of the form \eqref{eq:S spectral}, $\mathcal G_k$ be convex and the functions at \eqref{eq:increasing} be increasing. Let $F\in\F$, $x= (x_1, \ldots, x_k)\in\A$ and set $t = (t_1, \ldots, t_k) = T(F)$, $w=\min(x_k,t_k)$. Then, we obtain 
\begin{align*}
&S(x,y) = \\&= \sum_{r=1}^{k-1} \big(\one\{y\le x_r\} - q_r \big) \left(G_r(x_r) + \frac{p_r}{q_r} G_k(w)(x_r - y)\right) - \one\{y\le x_r\} G_r(y)\\ 
&\quad+\big(G_k(x_k) - G_k(w)\big) \left(x_k + \sum_{m=1}^{k-1} \frac{p_m}{q_m} \big(\one\{y\le x_m\}(x_m -y) - q_mx_m \big)\right) \\ 
&\quad - \mathcal G_k(x_k)  + G_k(w)(x_k - y)+ a(y).
\end{align*}
This implies that $\bar S(x,F) - \bar S(t,F) = R_1+ R_2$ with 
\begin{align*}
R_1 &= \sum_{r=1}^{k-1} \big(F(x_r) - q_r\big) \left(G_r(x_r) +  \frac{p_r}{q_r} G_k(w)x_r \right)  \\&\qquad-\int_{t_r}^{x_r} \left(G_r(y) +  \frac{p_r}{q_r} G_k(w)y \right)\diff F(y), \\
R_2 &=
\big(G_k(x_k) - G_k(w)\big) \left(x_k + \sum_{m=1}^{k-1} \frac{p_m}{q_m} \left(\int_{-\infty}^{x_m}(x_m -y)\diff F(y) - q_mx_m \right)\right) \\
& \quad - \mathcal G_k(x_k) + \mathcal G_k(t_k) + G_k(w)(x_k - t_k).
\end{align*}
We denote the $r$th summand of $R_1$ by $\xi_r$ and suppose that $t_r<x_r$. Due to the assumptions, the term $G_r(y) +  \frac{p_r}{q_r} G_k(w)y $ is increasing in $y\in [t_r, x_r]$ which implies that 
\begin{align*}
\xi_r&\ge \big(F(x_r) - q_r\big) \left(G_r(x_r) +  \frac{p_r}{q_r} G_k(w)x_r \right)  \\
&\quad -\big(F(x_r) - F(t_r)\big) \left(G_r(x_r) +  \frac{p_r}{q_r} G_k(w)x_r \right)=0.
\end{align*}
Analogously, one can show that $\xi_r\ge0$ if $x_r<t_r$. If $F$ has a unique $q_r$-quantile and the term $G_r(y) +  \frac{p_r}{q_r} G_k(w)y $ is strictly increasing in $y$, then we even get $\xi_r>0$ if $x_r\neq t_r$.

Now consider the term $R_2$. Splitting the integrals from $\infty$ to $x_m$ into integrals from $-\infty$ to $t_m$ and from $t_m$ to $x_m$ and partially integrating the latter, we obtain
\begin{align*}
R_2 &= \big(G_k(x_k) - G_k(w)\big)  \left(x_k + \sum_{m=1}^{k-1} p_m \left(t_m-x_m - \frac{1}{q_m}\int_{-\infty}^{t_m} y\diff F(y) + \frac{1}{q_m}\int_{t_m}^{x_m}F(y)\diff y \right)\right) \\
& \quad - \mathcal G_k(x_k) + \mathcal G_k(t_k) + G_k(w)(x_k - t_k) \\
&= \big(G_k(x_k) - G_k(w)\big)\left(x_k - t_k +\sum_{m=1}^{k-1} p_m \left(t_m-x_m  + \frac{1}{q_m}\int_{t_m}^{x_m}F(y)\diff y \right) \right)  \\
& \quad - \mathcal G_k(x_k) + \mathcal G_k(t_k) + G_k(w)(x_k - t_k) \\
&\ge \big(G_k(x_k) - G_k(w)\big) (x_k - t_k) - \mathcal G_k(x_k) + \mathcal G_k(t_k) + G_k(w)(x_k - t_k)\\
&= \mathcal G_k(t_k) -\mathcal G_k(x_k) - G_k(x_k)(t_k - x_k) \ge0.
\end{align*}
The first inequality is due to the fact that (i) $G_k$ is increasing and (ii) for $x_m\neq t_m$ we have $\frac{1}{q_m}\int_{t_m}^{x_m} F(y)\diff y\ge x_m - t_m$ with strict inequality if $F$ has a unique $q_m$-quantile. The last inequality is due to the fact that $\mathcal G_k$ is convex. The inequality is strict if $x_k\neq t_k$ and if $\mathcal G_k$ is strictly convex.

(iii) If $f$ denotes the density of $F$, it holds that
\be{eq:U_alpha}
\ES_{\alpha}(F) = \frac{1}{\alpha}\int_{-\infty}^{F^{-1}(\alpha)} yf(y)\diff y, \qquad \alpha\in(0,1].
\ee

We first show the assertions concerning $V$ given at \eqref{eq:V spectral}. Let $F\in \F$ with density $f = F'$ and let $t = T(F)$. Then we have for $m\in\{1, \ldots, k-1\}$, $x\in\A$, that $\bar V_m(x,F) = F(x_m) - q_m$ which is zero if and only if $x_m = t_m$. On the other hand, using the identity at \eqref{eq:U_alpha} 
\[
\bar V_k(t_1, \ldots, t_{k-1},x_k,F) = x_k - \sum_{m=1}^{k-1} \frac{p_m}{q_m} \int_{-\infty}^{t_m} yf(y)\diff y = x_k - t_k.
\]
Hence, it follows that $V$ is a strict $\F$-identification function for $T$. Moreover, $V$ satisfies assumption  (V\ref{ass:V2}), and we have for $m\in\{1, \ldots, k-1\}$, $l\in\{1, \ldots, k\}$ and $x\in\interior(\A)$ that $\partial_l \bar V_m(x,F)=0$ if $l \neq m$ and $\partial_m \bar V_m(x,F)= f(x_m)$, $\partial_m \bar V_k(x,F)= -(p_m/q_m)x_m f(x_m)$ and $\partial_k \bar V_k(x,F) = 1$. 

From now on, we assume that $t = T(F)\in \interior(\A)$.
Let $S$ be a strictly $\F$-consistent scoring function for $T$ satisfying (S\ref{ass:S3}). Then we can apply Theorem \ref{thm: Osband's principle}  and Corollary \ref{cor: OP SOC}  to get that there are locally Lipschitz continuous functions $h_{lm}\colon \interior(\A)\to\R$ such that \eqref{eqn: Osband FOC expectation} and \eqref{eqn: Osband SOC2} hold. If we evaluate \eqref{eqn: Osband SOC2} for $l=k$, $m\in\{1,\ldots, k-1\}$ at the point $x=t$ we get
\[
h_{km}(t)\partial_m \bar V_m(t,F) + h_{kk}(t) \partial_m\bar V_k(t,F) = h_{mk}(t)\partial_k\bar V_k(t,F),
\]
which takes the form $h_{km}(t) f(t_m) - h_{kk}(t) \frac{p_m}{q_m}t_m f(t_m) = h_{mk}(t)$.
Invoking assumption (V\ref{ass:V4}) for $(V_1,\ldots, V_{k-1})$, we get that necessarily $h_{mk}(t)=0$ and $h_{km}(t) = (p_m/q_m)t_m h_{kk}(t)$. So with the surjectivity of $T$ we get for $x\in\interior(\A)$ that 
\begin{align}\label{eq:1}
h_{mk}(x)=0, && h_{km}(x) = \frac{p_m}{q_m} x_m h_{kk}(x) &&\text{for all } m\in\{1, \ldots, k-1\}.
\end{align}
Now, we can evaluate \eqref{eqn: Osband SOC2} for $m,l\in\{1,\ldots, k-1\}$, $m\neq l$, at $x=t$ and use the first part of \eqref{eq:1} to get that $h_{ml}(t) f(t_l) = h_{lm}(t) f(t_m)$.
Using again the same argument, we get for $x \in \interior(\A)$ that
\be{eq:2}
h_{ml}(x)=0 \qquad \text{for all } m,l\in\{1, \ldots, k-1\}, \ l\neq m.
\ee
At this stage, we can evaluate \eqref{eqn: Osband SOC2} for $l\in\{1, \ldots, k-1\}$, $m \in \{1,\dots,k\}$, $m\neq l$, for some $x\in\interior(\A)$. Using \eqref{eq:1} and \eqref{eq:2} we obtain
\[
\sum_{i=1}^k \big(\partial_l h_{mi}(x) - \partial_m h_{li}(x)\big)\bar V_i(x_i,F) = 0.
\] 
Invoking assumption (V\ref{ass:V1}) and using \eqref{eq:1} and \eqref{eq:2}, we can conclude that for almost all $x \in \A$,
\be{eq:3}
\partial_l h_{mm}(x) =0 \qquad \text{for all } l\in\{1, \ldots, k-1\},\  m \in \{1,\dots,k\}, \ l\neq r.
\ee
and 
\begin{equation}\label{eq:4}
\partial_k h_{ll}(x) = \frac{p_l}{q_l} h_{kk}(x) \qquad \text{for all } l\in\{1, \ldots, k-1\}.
\end{equation}
Equation \eqref{eq:3} for $m = k$ shows that there is a locally Lipschitz continuous function $g_k \colon \A_k'\to\R$ such that for all $(x_1, \ldots, x_k)\in \interior(\A)$, we have $h_{kk}(x_1, \ldots, x_k) = g_k(x_k)$.
Equation \eqref{eq:4} together with \eqref{eq:3} gives that  for $l \in \{1,\dots,k-1\}$, and $(x_1, \ldots, x_k)\in \interior(\A)$, we obtain $h_{ll}(x_1, \ldots, x_k)=(p_l/q_l) G_k(x_k) + g_l(x_l)$, where $g_l\colon\A_l'\to\R$ is locally Lipschitz continuous and $G_k\colon \A_k'\to\R$ is such that $G_k'=g_k$.

Knowing the form of the matrix-valued function $h$, we can apply the second part of Proposition \ref{prop: integration}. Let $z\in\interior(\A)$ be some star point. Then there is some $\F$-integrable function $b\colon \R\to\R$ such that 
\begin{align}\label{eq:S integrated}
S(x,y) &=
 \sum_{r=1}^{k-1} \int_{z_r}^{x_r} \left(\frac{p_r}{q_r} G_k(z_k) +g_r(v)\right) \big(\one\{y\le v\} - q_r \big)\diff v \\ \nonumber
&+\big(G_k(x_k) - G_k(z_k)\big) \sum_{m=1}^{k-1} \frac{p_m}{q_m} \big(x_m(\one\{y\le x_m\} -q_m) - y\one\{y\le x_m\} \big) \\ 
& + G_k(x_k)x_k - \mathcal G_k(x_k) + b(y),\nonumber
\end{align}
for almost all $(x,y)$ where $\mathcal G_k\colon \A_k'\to\R$ is such that $\mathcal G_k'=G_k$. One can check by a straightforward computation that the representation of $S$ at \eqref{eq:S integrated} is equivalent to the one at \eqref{eq:S spectral} upon choosing a suitable $\F$-integrable function $a\colon \R\to\R$.  

It remains to show that $\mathcal G_k$ is strictly convex and that the functions given at \eqref{eq:increasing} are strictly increasing. To this end, we use  Lemma \ref{lem:sufficiency} part (i). Let $D = \{s \in \R \colon t+ sv \in \interior(\A)\}$, and let $v = (v_1,\dots,v_k) \in \mathbb{S}^{k-1}$ and without loss of generality assume $v_k \ge 0$. We define $\psi\colon D \to \R$ by $\psi(s):= \bar S(t + sv,F)$, that is,
\begin{equation*}\label{eq:psis}
\begin{split}
\psi(s)&= \sum_{r=1}^{k-1}\int_{z_r}^{\bar s_r}\Big(\frac{p_r}{q_r}G_k(z_k) + g_r(v)\Big)(F(v) - q_r) \dint v \\
&\quad + (G_k(\bar s_k)-G_k(z_k)) \sum_{m=1}^{k-1}\frac{p_m}{q_m} \Big(\bar s_m (F(\bar s_m) - q_m) - \int_{-\infty}^{\bar s_m} y f(y)\dint y\Big)\\
&\quad + \bar s_k G_k(\bar s_k) - \mathcal{G}_k(\bar s_k) + \bar b(F) , 
\end{split}
\end{equation*}
where we use the notation $\bar s = t + sv$. The function $\psi$ has a minimum at $s =0$. Hence, there is $\varepsilon>0$ such that $\psi'(s)<0$ for $s\in(-\varepsilon,0)$ and $\psi'(s)>0$ for $s\in(0,\varepsilon)$. If $v_k = 0$, then
\begin{equation*}\label{eq:dpsi-vkzero}
\psi'(s) = \sum_{r=1}^{k-1} (F(\bar s_r) - q_r)v_r\Big(g_r(\bar s_r) + \frac{p_r}{q_r}G_k(\bar s_k)\Big).
\end{equation*}
Choosing $v$ as the $m$th standard basis vector of $\R^k$ for $m \in \{1,\dots,k-1\}$, we obtain that $g_r(\bar s_r) + \frac{p_r}{q_r}G_k(\bar s_k)>0$. Exploiting the surjectivity of $T$ we can deduce that the functions at \eqref{eq:increasing} are strictly increasing. On the other hand, if $v$ is the $k$th standard basis vector, we obtain that $\psi'(s) = g_k(\bar s_k) s$. Again using the surjectivity of $T$ we get that $g_k>0$ which shows the strict convexity of $\mathcal G_k$.
\qed

\subsubsection*{Proof of Corollary \ref{cor:spectral}}

For the first part of the claim, note that if $\mu(\{1\})=1$, then $\nu_\mu$ coincides with the expectation and is thus 1-elicitable. If $\mu(\{1\})=0$, the assertion of the corollary is a direct consequence of Theorem \ref{thm:spectral} (i). If $\lambda:=\mu(\{1\})\in(0,1)$, then we can write
\(
\mu = \sum_{m=1}^{k-2} p_m \delta_{q_m} + \lambda \delta_1
\),
where $p_m\in(0,1)$, $\sum_{m=1}^{k-2} p_m=1-\lambda$, $q_m\in(0,1)$ and the $q_m$'s are pairwise distinct. Define the probability measure
\(
\tilde \mu:= \sum_{m=1}^{k-2} \frac{p_m}{1-\lambda} \delta_{q_m}
\).
Using Theorem \ref{thm:spectral} (i), the functional $(T_1',\ldots, T_{k-1}')\colon \F\to\R^{k-1}$ is $(k-1)$-elicitable where $T_m'(F):=F^{-1}(q_m)$, $m\in\{1,\ldots, k-2\}$, and $T_{k-1}'(F) = \nu_{\tilde\mu}(F)$.
Using Lemma \ref{lem: sum} we can deduce that the functional $(T_1',\ldots, T_{k-1}',\nu_{\delta_1}) \colon$ $\F\to \R^k$ is $k$-elicitable. Note that 
\(
\nu_{\mu} = (1-\lambda)\nu_{\tilde\mu} + \lambda \nu_{\delta_1}.
\)
Hence, we can apply Proposition \ref{prop: revelation principle} to deduce that the functional $T = (T_1,\ldots, T_k) : \F\to\R^k$ is $k$-elicitable where $T_m = T_m'$, $m\in\{1,\ldots, k-2\}$, $T_{k-1} = \nu_{\delta_1}$ and $T_k = \nu_\mu$.
\qed

\subsubsection*{Proof of Corollary \ref{cor:VaR-ES}}
The sufficiency follows directly from Theorem \ref{thm:spectral}.
We will show that $G_2$ is necessarily bounded below. Suppose the contrary. For the action domain $\A_0$, we have $\A'_{1,x_2} = [x_2,\infty)$, therefore, for $x_2 \le x_1 < x_1'$ \eqref{eq:increasing} yields
\[
-\infty < G_1(x_1) - G_1(x_1') \le \frac{1}{\alpha}G_2(x_2)(x_1' - x_1).
\]
Letting $x_2 \to -\infty$ one obtains a contradiction. Let $C_2=\lim_{x_2 \to -\infty}G_2(x_2) > -\infty$. Then, by \eqref{eq:increasing}, we obtain that $G_1(x_1) + (C_2/\alpha)x_1$ is increasing in $x_1 \in \R$. We can write $S$ at \eqref{eq:S VaR,ES} as
\begin{align*}
S(x_1,x_2,y) &=\big(\one\{y\le x_1\} - \alpha \big)\left(G_1(x_1) + \frac{C_2}{\alpha}x_1\right) - \one\{y\le x_1\}\left(G_1(y)+\frac{C_2}{\alpha}y\right)\\
&\quad + (G_2(x_2) - C_2)\Big(\frac{1}{\alpha}\one\{y\le x_1\}(x_1 - y) - (x_1 - x_2)\Big) \\
&\quad - (\mathcal{G}_2(x_2)-C_2x_2) + a(y).
\end{align*}
The last expression is again of the form at \eqref{eq:S VaR,ES} with an increasing function $\tilde{G}_1(x_1) = G_1(x_1) + (C_2/\alpha)x_1$ and with $\tilde{G}_2(x_2) = G_2(x_2) - C_2 \ge 0$. 
\qed

\section*{Acknowledgements}
{We would like to thank Carlo Acerbi, Rafael Fron\-gillo and Tilmann Gneiting for fruitful discussions which inspired some of the results of this paper. We are grateful for the valuable comments of two anonymous referees which significantly improved the paper. This project is supported by the Swiss National Science Foundation (SNF) via grant 152609.}

\bibliographystyle{plainnat}


\end{document}